\documentclass[11pt,a4paper]{article}

\usepackage{graphicx,latexsym,euscript,makeidx,color,bm}
\usepackage{amsmath,amsfonts,amssymb,amsthm,thmtools,mathtools,mathrsfs,enumerate}

\usepackage{soul}
\usepackage[colorlinks,linkcolor=blue,citecolor=red]{hyperref}

\usepackage{geometry}
\allowdisplaybreaks[4]
\def\dbE{\mathbb{E}}     
\def\dbF{\mathbb{F}}   \def\cF{{\cal F}}  
     
\def\dbH{\mathbb{H}}     
     
   \def\cJ{{\cal J}}  
     
   \def\cL{{\cal L}}

\def\dbP{\mathbb{P}}   
   
\def\dbR{\mathbb{R}}   \def\cR{{\cal R}}
\def\dbS{\mathbb{S}}   
   \def\cT{{\cal T}}
 \def\sU{\mathscr{U}}

\def\ns{\noalign{\ss}}
\def\ds{\displaystyle}
\def\ss{\smallskip}             \def\hb{\hbox}
\def\ms{\medskip}              
          \def\lan{\langle}       \def\les{\leqslant}
\def\ges{\geqslant}

\def\q{\quad}         \def\ran{\rangle}       \def\tr{\hb{tr$\,$}}
\def\qq{\qquad}             
\def\no{\noindent}          
         
\def\nn{\nonumber}         
\def\rf{\eqref}        \def\ds{\displaystyle}
\def\cd{\cdot}         
\def\deq{\triangleq}  \def\({\Big(}           
\def\ti{\tilde}       \def\){\Big)}           
   \def\[{\Big[}           
  \def\]{\Big]}
\def\({\Big (}
\def\){\Big )}
\def\[{\Big[}
\def\]{\Big]}
\def\3n{\negthinspace \negthinspace \negthinspace }
\def\2n{\negthinspace \negthinspace }
\def\1n{\negthinspace }
\def\bel{\begin{equation}\label}
\def\ee{\end{equation}}
\def\bea{\begin{eqnarray}}
\def\eea{\end{eqnarray}}
\def\bt{\begin{theorem}\label}
\def\et{\end{theorem}}
\def\bc{\begin{corollary}\label}
\def\ec{\end{corollary}}
\def\bex{\begin{example}\label}
\def\ex{\end{example}}
\def\bl{\begin{lemma}\label}
\def\el{\end{lemma}}
\def\bp{\begin{proposition}\label}
\def\ep{\end{proposition}}
\def\br{\begin{remark}\label}
\def\er{\end{remark}}
\def\ba{\begin{array}}
\def\ea{\end{array}}
\def\bde{\begin{definition}\label}
\def\ede{\end{definition}}

       \def\l{\lambda}    
               
\def\d{\delta}           \def\G{\varGamma}
\def\e{\varepsilon}      
\def\f{\varphi}             \def\Om{\varOmega}
       \def\si{\sigma}    \def\Si{\varSigma}
      \def\Th{\varTheta}

\newtheoremstyle{thry}
{}      
{}      
{\sl}   
{}      
{\bf}   
{.}     
{.5em}  
{}      
\theoremstyle{thry}

\newtheorem{theorem}{Theorem}[section]
\newtheorem{proposition}[theorem]{Proposition}
\newtheorem{corollary}[theorem]{Corollary}
\newtheorem{lemma}[theorem]{Lemma}

\theoremstyle{definition}
\newtheorem{definition}[theorem]{Definition}
\newtheorem{example}[theorem]{Example}

\theoremstyle{remark}
\newtheorem{remark}[theorem]{Remark}

\def\punct{}
\newtheoremstyle{dotless}{}{}{\rm}{}{\bf}{\punct}{.5em}{}
\theoremstyle{dotless}

\newtheorem{taggedassumptionx}{}
\newenvironment{taggedassumption}[1]
 {\renewcommand\thetaggedassumptionx{#1}\taggedassumptionx}
 {\endtaggedassumptionx}

\makeatletter
   
   \@addtoreset{equation}{section}
   \newcommand{\setword}[2]{%
   \phantomsection
   #1\def\@currentlabel{\unexpanded{#1}}\label{#2}%
   }
\makeatother

\begin{document}

\title{\bf Sub-Infinite Horizon Stochastic Linear-Quadratic Optimal Control Problems and Delayed Backward Riccati Equations}

\author{
Yutao Chen\thanks{ School of Mathematical Sciencess,
Fudan University, Shanghai, 200433, China (Email: {\tt ytchen22@m.fudan.edu.cn}).}
                   ~~~
Hongwei Lou\thanks{ School of Mathematical Sciencess, and LMNS
Fudan University, Shanghai, 200433, China (Email: {\tt hwlou@fudan.edu.cn}).}
                   ~~~
Hanxiao Wang\thanks{Corresponding author. School of Mathematical Sciences, Shenzhen University, Shenzhen,
                   518060, China (Email: {\tt hxwang@szu.edu.cn}). This author is supported in part by   NSFC grant 12522121.
                 The authors would like to thank Professor Jiongmin Yong for his valuable suggestions.}
}

\maketitle

\ms

\no{\bf Abstract.}
In this paper, we investigate a class of so-called sub-infinite horizon stochastic linear-quadratic  optimal control problems,
in which the initial time $t$ is arbitrarily taken from $[0,\infty)$ and the running cost is defined over $[t,t+T]$ for a given $T>0$.
The optimal control of this type of problem can be obtained by standard methods;
however, it  is shown that the resulting optimal control is generally time-inconsistent.
Thus, instead of seeking an optimal control, which is time-inconsistent,
we aim to find a time-consistent,  locally optimal, and time-invariant equilibrium strategy, by introducing a new and very interesting type of Riccati equation. Its main feature is that the generator depends on a delay term of the unknown.   In other words, this Riccati equation is a backward ordinary differential equation (ODE) with delay, which is equivalent to a forward ODE with advanced terms. Such an equation is essentially a Fredholm integral equation, whose solvability is challenging.
We overcome the difficulty by deriving a sharp a priori estimate and applying the Leray--Schauder fixed point theorem. To this end, we establish a comparison theorem between two matrix-valued nonlinear algebraic equations.
The convergence behavior of the solution to the Riccati equation as $T\to\infty$ is also provided.

\ms

\no{\bf Keywords.} stochastic linear-quadratic optimal control, sub-infinite horizon,  equilibrium strategy, time-inconsistency, Riccati equation.

\ms
\no{\bf AMS 2020 Mathematics Subject Classification.} \rm 93E20, 49N10, 49N35

\section{Introduction}\label{Sec:Intro}

Linear-quadratic (LQ) optimal control is a classical and fundamental problem in control theory.
The early studies focused on deterministic settings, where the state equation is a linear ordinary differential equation (ODE) and all functions involved are deterministic. Stochastic LQ problems were first addressed by Wonham \cite{Wonham1968} in 1968. Later, Bismut \cite{Bismut1976} provided a thorough analysis of stochastic LQ optimal control with random coefficients.
For the developments of stochastic LQ optimal control, we refer the reader to the books by Yong and Zhou \cite{Yong-Zhou1999}
and  Sun and Yong \cite{Sun-Yong2020}.

\ms

Based on the time horizon, stochastic LQ optimal control problems can be divided into finite horizon and infinite horizon cases.
Compared with the finite horizon case \cite{Yong-Zhou1999,Sun-Yong2020},
the optimal feedback strategy in the infinite horizon problem is time-invariant,
which is more practical in applications.
Moreover, since the infinite horizon integral in the cost functional is not always well-defined, it is closely related to the well-known concept of stabilizability. From the viewpoint of the Hamilton--Jacobi--Bellman (HJB) equation, the algebraic Riccati equation in infinite horizon problems corresponds to an elliptic equation, which is essentially different from the parabolic equation derived in finite horizon control problems. Therefore, the infinite horizon LQ optimal control problem is of independent interest.
Some recent developments in infinite horizon stochastic LQ optimal control problems can be found in, for example, Ait Rami and Zhou \cite{Ait-Rami-Zhou2000}, Yao, Zhang, and Zhou \cite{Yao-Zhou2001}, Huang, Li, and Yong \cite{Huang-2015}, Sun and Yong \cite{Sun-Yong-2018}, Wei and Yu \cite{Wei-Yu2021}, Li et al. \cite{Li-2022}, and L\"{u} \cite{LV2024}.

\ms

In this paper, we investigate an intermediate class of problems,
called \emph{sub-infinite horizon problems}, lying between finite horizon and infinite horizon problems.
Let $(\Om,\cF,\dbF,\dbP)$ be a complete filtered probability space, on which a one-dimensional standard Brownian motion $W$ is defined, whose natural filtration augmented by all the $\dbP$-null sets in $\cF$ is denoted by $\dbF\equiv\{\cF_s\}_{s\ges0}$.
For any given initial pair $(t,x)\in[0,\infty)\times\dbR^n$ and a running time horizon $T>0$,
consider the state equation over the finite horizon $[t,t+T]$:
\bel{state}\left\{\begin{aligned}
   dX(s) &= [AX(s) +Bu(s)]ds + [C X(s) +D u(s)]dW(s), \\
    X(t) &= x,
\end{aligned}\right.\ee
where  $A,\,C\in\dbR^{n\times n}$ and $B,\,D\in\dbR^{n\times m}$ are the given coefficients.
The process $u(\cd)$ is called the {\it control process}, which belongs to the space
\begin{align*}
\sU[t,t+T]&=\Big\{u:[t,t+T]\times\Om\to\dbR^m\bigm|u(\cd)\hb{~is $\dbF$-progressively measurable}, \\ &\qq~\dbE\int^{t+T}_t|u(s)|^2ds<\infty\Big\},
\end{align*}
and the unique  solution  of \rf{state} is called a {\it state process}.
To measure the performance of the control $u(\cd)$, we introduce the following cost functional:
\begin{align}
\cJ(t,x;u(\cd)) &= \dbE\int_t^{t+T}\[\lan QX(s),X(s)\ran+ \lan R u(s),u(s)\ran\]ds ,\label{cost}
\end{align}
with $Q\in \dbS^n$ and $R\in\dbS^m$ being given weighting matrices. The corresponding optimal control problem can be stated as follows.

\ms

\no{\bf Problem (LQ-Sub).} For any given initial pair $(t,x)\in [0,\infty)\times\dbR^n$, find a control $u^*(\cd)\in \sU[t,t+T]$ such that
\bel{inf-J}
\cJ(t,x;u^*(\cd))\les \cJ(t,x;u(\cd)),\qq\forall u(\cd)\in\sU[t,t+T].
\ee

\ms

The main feature of Problem (LQ-Sub) is that although the  running time horizon $T>0$ is finite,
the initial time $t$ can be chosen from the infinite time horizon $[0,\infty)$; for this reason, we call
Problem (LQ-Sub) a sub-infinite horizon stochastic LQ optimal control problem.
Besides mathematical considerations, there exist many motivations for sub-infinite horizon problems in practice.
For example, a company may exist for a very long time, so its initial time $t$ should be chosen from $[0,\infty)$,
while it mainly cares about its benefit over the next five years, for which the running time horizon $T$ should be finite.
Indeed, this type of control problem has been widely used to design feedback controls for ensuring system stability, as exemplified by Chen and Shaw \cite{ChenShaw1982}, Mayne and Michalska \cite{MayneMichalska1990}, Chen and Allg\"{o}wer \cite{Chen1998}, and the book by Kwon and Han \cite{KwonHan2005Book}.

\ms

For any fixed time horizon $[t,t+T]$, the optimal control of Problem (LQ-Sub) can be solved by a standard method:
Under the standard condition $Q\ges 0$ and $R>0$, the unique optimal control $u^*(\cd;t,x)$ is given by
the following feedback form:
\begin{align}
&u^*(s;t,x)=\Th^*(s;t)X^*(s;t,x)\nn\\
&:=-[R+D^\mathrm{T} P(s;t) D]^{-1}[B^\mathrm{T} P(s;t)+D^\mathrm{T} P(s;t)C]X^*(s;t,x),\q s\in[t,t+T],
\label{Intro-pre-solution}
\end{align}
where $X^*(\cd;t,x)$ is the unique solution of \rf{state} corresponding to the control $u^*(\cd)$,
and $P(\cd;t)$ is the unique solution of the following Riccati equation:
\bel{Intro-pre-riccati}\left\{
\begin{aligned}
&{dP(s;t)\over ds}+P(s;t)A+A^\mathrm{T} P(s;t)+C^\mathrm{T} P(s;t)C+Q-[P(s;t)B+C^\mathrm{T} P(s;t)D]\\
&\q\times[R+D^\mathrm{T} P(s;t)D]^{-1}[B^\mathrm{T} P(s;t)+D^\mathrm{T} P(s;t)C]=0,\qq s\in[t,t+T],\\
&P(t+T;t)=0.
\end{aligned}\right.
\ee
Through careful observation, one can find that for $t^\prime \in(t,t+T)$,
\bel{Intro-pre-riccati1}
P(s;t)\neq P(s;t^\prime),\qq s\in[t^\prime,t+T],
\ee
in general. Then, from \rf{Intro-pre-solution}, we have
\bel{Intro-pre-riccati2}
u^*(s;t,x)\neq u^*(s;t^\prime,X^*(t^\prime;t,x)) ,\qq s\in[t^\prime,t+T],
\ee
which means that the  optimal control selected at a given initial pair will not stay optimal thereafter.
In other words, Problem (LQ-Sub) is \emph{time-inconsistent}.

\ms

Given the time-inconsistent nature of Problem (LQ-Sub) described above,
we will study its equilibrium strategy (see Definition \ref{Def:ES}) from a dynamic game viewpoint.
The earliest mathematical study of time-inconsistent optimal control is due to Strotz \cite{Strotz-1955}, followed by
Ekeland and Lazrak \cite{Ekeland-Lazrak-2010}, Bj\"ork, Khapko, and Murgoci \cite{Bjork-2017},
and Yong \cite{Yong2012,Yong2014}, to mention a few. In the linear-quadratic framework,
Hu, Jin, and Zhou \cite{Hu-2012,Hu-2017} studied the open-loop equilibria using forward-backward stochastic differential equations;
Yong \cite{Yong2017} constructed a closed-loop equilibrium strategy by the multi-person differential game method;
Dou and L\"{u} \cite{Dou-2020} extended the results of \cite{Yong2017} to the infinite-dimensional system;
Wang \cite{Wang2020} provided a characterization of closed-loop equilibrium strategies for  mean-field problems;
Cai et al. \cite{Cai-2022} studied  the problem of controlled ODE systems;
Li et al. \cite{Li-2026} studied the problem with an indefinite cost functional;
L\"{u} and Ma \cite{Lu-2024} and L\"{u},  Ma, and Wang \cite{Lu-Ma-Wang-2025}  studied the forward-backward problem
using an a priori estimate method.
However, all the above works focus on finite horizon problems, and none of them can address Problem (LQ-Sub).
This paper aims to resolve this issue, and some interesting new phenomena arise.

\ms

The main results of this paper can be briefly summarized as follows.

\ms

$\bullet$ The following new type of Riccati equation is introduced:
 \bel{Intr-ERiccati}\left\{
\begin{aligned}
&\dot{\Si}(s)+\Si(s)[A+B\Psi]+[A+B\Psi]^\mathrm{T} \Si(s)+[C+D\Psi]^\mathrm{T} \Si(s)[C+D\Psi]\\
&\q+[Q+\Psi^\mathrm{T} R\Psi]=0,\qq 0\les  s \les T,\\
&\Si(T)=0,
\end{aligned}\right.
\ee
with
\bel{Intr-ERiccati-Th}
\Psi=-[R+D^\mathrm{T} \Si(0)D]^{-1}[B^\mathrm{T} \Si(0)+D^\mathrm{T} \Si(0)C].
\ee
We show that if the Riccati equation \rf{Intr-ERiccati}--\rf{Intr-ERiccati-Th} admits a positive semidefinite solution $\Si(\cd)$,
then $\bar\Th_{ES}:=\Psi$ is an equilibrium strategy of Problem (LQ-Sub) (see Theorem \ref{theorem:verification}).
It is worth pointing out  that while the pre-committed optimal strategy $\Th^*(\cd)$ given by \rf{Intro-pre-solution} is time-varying,
the equilibrium strategy $\bar\Th_{ES}$ is time-invariant.
Thus, the equilibrium strategy $\bar\Th_{ES}$ retains the important \emph{time-invariance property}
of optimal feedback strategies in infinite horizon control problems.

\ms
$\bullet$
Under the standard condition \ref{ass:H1} and the stabilizability condition \ref{ass:H2},
the solvability of \emph{delayed Riccati equation} \rf{Intr-ERiccati}--\rf{Intr-ERiccati-Th} is established
(see Theorem \ref{theorem:solvability-SERE}).
Moreover, we show that the uniqueness of  solutions to \rf{Intr-ERiccati}--\rf{Intr-ERiccati-Th} does not hold in general,
and that the stabilizability condition is also necessary for its solvability. Two counterexamples (see Examples \ref{example:non-unique} and \ref{example:no_solution}) are provided to illustrate these claims.

\ms

Recall from \cite{Yong-Zhou1999,Sun-Yong2020} that the classical Riccati equation associated with stochastic LQ optimal control problems is given by
\bel{Intro-classical-riccati}\left\{
\begin{aligned}
&\dot{P}(s)+P(s)A+A^\mathrm{T} P(s)+C^\mathrm{T} P(s)C+Q-[P(s)B+C^\mathrm{T} P(s)D]\\
&\q\times[R+D^\mathrm{T} P(s)D]^{-1}[B^\mathrm{T} P(s)+D^\mathrm{T} P(s)C]=0,\qq s\in[0,T],\\
&P(T)=0.
\end{aligned}\right.
\ee
We see that in \rf{Intro-classical-riccati}, the derivative $\dot{P}(s)$ at time $s$ depends only on $P(s)$, whereas in \rf{Intr-ERiccati}--\rf{Intr-ERiccati-Th}, the derivative $\dot{\Si}(s)$ at time $s$ depends, through $\Psi$, on the historical value $\Si(0)$. Thus, there is an essential difference between \rf{Intro-classical-riccati} and \rf{Intr-ERiccati}--\rf{Intr-ERiccati-Th}. To emphasize the new feature of \rf{Intr-ERiccati}--\rf{Intr-ERiccati-Th}, we refer to it as a \emph{delayed Riccati equation}.

\ms
Note that \rf{Intr-ERiccati} is a backward equation with a terminal condition.
By time reversal, the delayed Riccati equation \eqref{Intr-ERiccati}--\eqref{Intr-ERiccati-Th} is equivalent to the following forward equation with an advanced term $\G(T)$:

\bel{Intr-ERiccati-Gamma}\left\{
\begin{aligned}
&\dot{\G}(s)-\G(s)[A+B\Psi]-[A+B\Psi]^\mathrm{T} \G(s)\\
&\q-[C+D\Psi]^\mathrm{T} \G(s)[C+D\Psi]-[Q+\Psi^\mathrm{T} R\Psi]=0,\qq 0\les  s \les T,\\
&\G(0)=0,
\end{aligned}\right.
\ee
with
\bel{Intr-ERiccati-Th-Gamma}
\Psi=-[R+D^\mathrm{T} \G(T)D]^{-1}[B^\mathrm{T} \G(T)+D^\mathrm{T} \G(T)C].
\ee
To the best of our knowledge, this type of Riccati equation is derived for the first time in control theory.

\ms

Due to the presence of $\Si(0)$ in \rf{Intr-ERiccati-Th}, the delayed Riccati equation \rf{Intr-ERiccati}--\rf{Intr-ERiccati-Th} is essentially of Fredholm type (see Example \ref{example1}). Consequently, the standard Banach fixed point theorem is not applicable, and its solvability becomes far more challenging than that of the classical Riccati equation \rf{Intr-ERiccati-Gamma} and the Volterra-type equilibrium Riccati equation derived by \cite{Yong2017}.
We overcome the difficulty by deriving a sharp a priori estimate (see Proposition \ref{bound-Gamma})
and applying the Leray--Schauder fixed point theorem  (see Lemma \ref{lem:leray-schauder}).
A key step is that we establish a comparison theorem between two matrix-valued nonlinear algebraic equations
(see Lemma \ref{comparison}).
Some illustrative examples are also given (See Examples \ref{example:non-unique} and \ref{example:no_solution}).

\ms

$\bullet$
Let $\Si_T(\cd)$ be a positive semidefinite solution to the delayed Riccati equation \rf{Intr-ERiccati}--\rf{Intr-ERiccati-Th},
where the subscript  $T$ indicates that the equation is defined over the interval $[0,T]$.
We show that
\bel{Intr-convergence}
\lim_{T\to\infty}\Si_T(0)=P_\infty,
\ee
where $P_\infty$ is the unique stabilizing solution  to the algebraic Riccati equation associated with the infinite horizon stochastic LQ problem corresponding to \rf{state}--\rf{cost} (see Theorem \ref{thm:limit}).
This means that, as $T\to\infty$, the equilibrium strategy $\bar\Th_{ES}$ converges to
the optimal strategy of the infinite horizon problem, and that the  a priori estimate in Proposition \ref{bound-Gamma} is sharp.

\ms

The rest of this paper is organized as follows.
In Section \ref{sec:pre}, we collect some preliminary results and introduces several notions for Problem (LQ-Sub). In Section \ref{sec:VT}, the equilibrium strategy is introduced, the delayed Riccati equation is derived, and the verification theorem is proved. The solvability of the delayed Riccati equation is established in Section \ref{sec:solv}, and its convergence behavior is presented in Section \ref{sec:con}. We conclude the paper in Section \ref{sec:conclu}. Some lengthy proofs are given in the Appendix.

\section{Preliminary}\label{sec:pre}
Throughout this paper, we adopt the following notation. For a matrix $M$, $M^\mathrm{T}$ denotes its transpose, $\tr(M)$ its trace, and $|M|$ its Frobenius norm. The space $\dbR^{n\times m}$ consists of all $n\times m$ real matrices, equipped with the Frobenius inner product $\lan M,N\ran = \tr(M^\mathrm{T} N)$.  Let $\dbS^n$ be the subspace of symmetric matrices in $\dbR^{n\times n}$, and let $\dbS_+^n$ be the subset of positive semidefinite matrices in $\dbS^n$. For $M,N\in\dbS^n$, we write $M \ges N$ (resp. $M > N$) to mean that $M-N$ is positive semidefinite (resp. positive definite). Given a subinterval $[a,b]\subseteq [0,\infty)$ (where $b$ may be $\infty$) and a Euclidean space $\mathbb{H}$ (which may be $\dbR^n$, $\dbR^{n\times m}$, $\dbS^n$, etc.), we introduce the following spaces of functions and processes.
\begin{align*}
&C(a,b;\dbH)=\Big\{\f:[a,b]\to\dbH~\big|~\f(\cd)
   \hb{~is continuous, $\sup_{s\in[a,b]}|\f(s)|<\infty$}\Big\}. \\
&L_\dbF^2(a,b;\dbH)=\Big\{\f:[a,b]\times\Om\to\dbH~\big|~\f(\cd)
   \hb{~is $\dbF$-progressively measurable,}\\
  &\qq\qq\qq\qq  \dbE\int_a^b|\f(s)|^2ds<\infty\Big\}. \\
&L_\dbF^2(\Om;C([a,b];\dbH))=\Big\{\f:[a,b]\times\Om\to\dbH~\big|~\f(\cd)
   \hb{~is $\dbF$-adapted and continuous,}\\
&\qq\qq\qq\qq\qq\qq \dbE\big[\sup_{s\in[a,b]}|\f(s)|^2\big]<\infty\Big\}.
\end{align*}

\ms

Recall that $A,\,C\in\dbR^{n\times n}$ and $B,\,D\in\dbR^{n\times m}$.
We assume that the weighting matrices satisfy the following \emph{standard condition}.
\begin{taggedassumption}{(H1)}\label{ass:H1}
The weighting matrices of the cost functional \rf{cost} satisfy
$$
Q\ges 0, \q R>0.
$$
\end{taggedassumption}

The infinite horizon problem associated with Problem (LQ-Sub) is formulated as follows:
Consider the state equation \rf{state} over $[t,\infty)$ and  the following cost functional:
\begin{align}
\cJ_\infty (t,x;u(\cd)) &= \dbE\int_t^\infty\[\lan QX(s),X(s)\ran+ \lan R u(s),u(s)\ran\]ds .\label{cost-infty}
\end{align}

Note that even if we take $u(\cdot)\in L^2_{\dbF}(t,\infty;\dbR^m)$,
it is still not guaranteed that the unique solution $X(\cdot;t,x,u(\cdot))$ of \eqref{state}
belongs to $L^2_{\dbF}(t,\infty;\dbR^n)$; consequently, \eqref{cost-infty} may not be well-defined.
Thus, the admissible control set is defined by
$$
\sU_{ad}(t,x)=\Big\{ u(\cd)\in L^2_{\dbF}(0,\infty;\dbR^m) ~\Big|~\dbE\int_t^\infty |X(s;t,x,u(\cd))|^2ds<\infty\Big\},\q (t,x)\in[0,\infty)\times\dbR^n.
$$

\ms

To ensure that $\sU_{ad}(t,x)$ is nonempty, we introduce the  $L^2$-stabilizability condition for the state equation \eqref{state}.

\begin{definition}\label{def:stablizable}
The system $[A;C]$ is said to be \emph{$L^2$-stable} if
for any $x\in\dbR^n$, the unique solution $X(\cdot)=X(\cdot;x)$ of the following  system:
\bel{def:stablizable1}
\left\{\begin{aligned}
   dX(s) &= AX(s) ds + C X(s) dW(s), \qquad s\in[0,\infty),\\
    X(0) &= x,
\end{aligned}\right.
\ee
belongs to $L^2_{\dbF}(0,\infty;\dbR^n)$; that is,
\bel{def:stablizable2}
\dbE\int_0^\infty |X(s)|^2\,ds < \infty.
\ee
The system $[A,B;C,D]$ is said to be \emph{$L^2$-stabilizable} if there exists a matrix $\Theta\in\dbR^{m\times n}$ such that
the system $[A+B\Th;C+D\Th]$ is $L^2$-stable.
In this case, $\Th$ is called a \emph{stabilizer} of the system $[A,B;C,D]$.
\end{definition}

\begin{remark}
From \cite{Huang-2015,Sun-Yong-2018},
 the system $[A;C]$ is $L^2$-stable if and only if there exists a matrix $P\in\dbS^n_+$ such that
$$
PA+A^\mathrm{T} P+C^\mathrm{T} PC<0.
$$
A typical example is the case where $[A;C]$ satisfies  $A+A^\mathrm{T}+C^\mathrm{T} C<0$.
Moreover, it is clear that the system $[A,B;C,D]$ is $L^2$-stabilizable if $[A;C]$ is $L^2$-stable.
\end{remark}

\begin{taggedassumption}{(H2)}\label{ass:H2}
The system $[A,B;C,D]$ is $L^2$-stabilizable.
\end{taggedassumption}

Under \ref{ass:H2}, by Sun and Yong \cite{Sun-Yong-2018}, we have $\sU_{ad}(t,x)\neq\emptyset$ for any $(t,x)\in[0,\infty)\times\dbR^n$.
Further, under \ref{ass:H1} and \ref{ass:H2},
the infinite horizon stochastic LQ problem with state equation \rf{state} and cost functional \rf{cost-infty}
can be explicitly solved in the following manner.

\ms

The algebraic Riccati equation associated with the infinite horizon problem \rf{state} and \rf{cost-infty} is given by
\begin{align}
&A^\mathrm{T} P_\infty + P_\infty A + C^\mathrm{T} P_\infty C + Q- (B^\mathrm{T} P_\infty + D^\mathrm{T} P_\infty C)^\mathrm{T} \nn\\
&\qq \times(R + D^\mathrm{T} P_\infty D)^{-1} (B^\mathrm{T} P_\infty + D^\mathrm{T} P_\infty C) = 0. \label{ARE-infty}
\end{align}

\begin{definition}
We call $P_\infty\in\dbS^n$ a \emph{static stabilizing solution} to \rf{ARE-infty} if  it satisfies \rf{ARE-infty},
and the feedback strategy $\Th_\infty$, given by
\bel{def-SS-ARE}
\Th_\infty=-(R + D^\mathrm{T} P_\infty D)^{-1} (B^\mathrm{T} P_\infty + D^\mathrm{T} P_\infty C),
\ee
is a stabilizer of the system $[A,B;C,D]$.
\end{definition}

\begin{proposition}\label{prop:infinite}
Let \ref{ass:H1} and \ref{ass:H2} hold. Then the algebraic Riccati equation \rf{ARE-infty} admits a unique static stabilizing solution
$P_\infty\in\dbS^n_+$. The infinite horizon problem with state equation \rf{state} and cost functional \rf{cost-infty}
admits a unique optimal control $u^*(\cd)\in\sU_{ad}(t,x)$, which admits the closed-loop representation:
\bel{prop:infinite1}
u^*(s)=\Th_\infty X^*(s),\qq s\in[t,\infty),
\ee
where $X^*(\cd)$ is the unique solution to the closed-loop system:
\bel{prop:infinite2}
\left\{\begin{aligned}
   dX^*(s) &= [A +B\Theta_\infty ]X^*(s)ds + [C  +D\Theta_\infty] X^*(s)dW(s), \qquad s\in[t,\infty),\\
    X^*(t) &= x.
\end{aligned}\right.
\ee
Moreover, the value function is given by
\bel{prop:infinite3}
\lan P_\infty x,x\ran=\cJ_\infty(t,x;u^*(\cd))=\inf_{u(\cd)\in\sU_{ad}(t,x)}\cJ_\infty(t,x;u(\cd)).
\ee
\end{proposition}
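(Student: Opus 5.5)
We prove Proposition~\ref{prop:infinite} by approximating with finite-horizon problems and then verifying directly via Itô's formula. The existence of $P_\infty$ is obtained as a monotone limit. For $T>0$, let $P_T(\cd)$ solve the classical Riccati equation \rf{Intro-classical-riccati} on $[0,T]$ with $P_T(T)=0$. Under \ref{ass:H1} this solution exists on $[0,T]$, satisfies $P_T(s)\ges0$, and $\lan P_T(0)x,x\ran$ is the value of the problem on $[0,T]$. By time-invariance of the coefficients, $P_T(0)$ is nondecreasing in $T$, since enlarging the horizon adds a nonnegative running cost.

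The upper bound comes from \ref{ass:H2}. Fix a stabilizer $\Th$ and use the control $u=\Th X$. This gives $\lan P_T(0)x,x\ran\les \cJ_\infty(0,x;\Th X)=\lan \bar Px,x\ran$, where $\bar P\ges0$ solves the Lyapunov equation
$$
\bar P(A+B\Th)+(A+B\Th)^\mathrm{T}\bar P+(C+D\Th)^\mathrm{T}\bar P(C+D\Th)+Q+\Th^\mathrm{T} R\Th=0 .
$$
This solution is finite by $L^2$-stability. Hence $P_T(0)\uparrow P_\infty\in\dbS^n_+$. Writing $P_T(0)=\widetilde P(T)$, where $\widetilde P(\tau):=P_\tau(0)$ solves the forward (time-reversed) Riccati ODE, the derivative $\dot{\widetilde P}(T)$ converges along with $\widetilde P(T)$, so it must tend to $0$. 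Therefore $P_\infty$ satisfies \rf{ARE-infty}.

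Next we show that $\Th_\infty$ is a stabilizer and verify optimality. Rewriting \rf{ARE-infty} in closed-loop form with $\Th_\infty$ gives
$$
P_\infty(A+B\Th_\infty)+(\cdots)^\mathrm{T}P_\infty+(C+D\Th_\infty)^\mathrm{T}P_\infty(C+D\Th_\infty)=-(Q+\Th_\infty^\mathrm{T}R\Th_\infty)\les0 .
$$
This inequality need not be strict when $Q$ is degenerate, and that is the main obstacle. My plan is to compare with the bound above. For $X^*$ from \rf{prop:infinite2}, applying Itô's formula to $\lan P_T(s)X^*,X^*\ran$ on finite horizons and using $P_T\les P_\infty$ gives $\dbE\int_0^\infty(|Q^{1/2}X^*|^2+|u^*|^2)ds\les\lan P_\infty x,x\ran$, so $u^*\in L^2$. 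To get $X^*\in L^2$, I would use the stabilizer: $X^*$ solves the system driven by $[A+B\Th;C+D\Th]$ with the extra input $B(\Th_\infty-\Th)X^*$ and diffusion $D(\Th_\infty-\Th)X^*$. Since $|(\Th_\infty-\Th)X^*|\les |u^*|+|\Th X^*|$, this still involves $X^*$, so I would instead use the input $v=u^*-\Th X^*$ together with the fact that the $L^2$-stable system maps $L^2$ inputs to $L^2$ states (Sun--Yong). To avoid circularity, I would first show $\dbE\int_0^\infty|X^*|^2<\infty$ through the Lyapunov function $\bar P$ of the stabilizer, combined with the estimate on $\int|u^*|^2$. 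If this fails, I would fall back on citing \cite{Sun-Yong-2018}, where this is established.

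Optimality and uniqueness follow by completing the square. For any $u\in\sU_{ad}(t,x)$, Itô's formula applied to $\lan P_\infty X,X\ran$ on $[t,N]$, with $N\to\infty$ and $\dbE|X(N)|^2\to0$ along a subsequence (possible since $X\in L^2$), gives
$$
\cJ_\infty(t,x;u)=\lan P_\infty x,x\ran+\dbE\int_t^\infty\lan (R+D^\mathrm{T}P_\infty D)(u-\Th_\infty X),u-\Th_\infty X\ran ds .
$$
This yields \rf{prop:infinite3} and uniqueness of the optimal control, because $R+D^\mathrm{T}P_\infty D>0$.

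Uniqueness of the static stabilizing solution follows from the same identity. If $P'$ is another stabilizing solution, the identity holds with $P'$ in place of $P_\infty$ for admissible $u$, so $\lan P'x,x\ran$ equals the value function as well, and hence $P'=P_\infty$.
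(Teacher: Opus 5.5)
\textbf{There is a genuine gap, and under \ref{ass:H1}--\ref{ass:H2} alone it cannot be closed, because the proposition is false there.} The paper gives no proof of this proposition; it quotes it from the infinite-horizon literature (Sun--Yong).

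Several of your steps are correct: $P_T(0)$ is nondecreasing, it is bounded by the Lyapunov matrix $\bar P$ of a stabilizer, its limit solves \eqref{ARE-infty}, and your completion-of-squares identity holds. The gap is the step you flagged yourself: showing that $\Theta_\infty$ is a stabilizer.

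Take $n=m=1$, $A=0$, $B=1$, $C=D=0$, $Q=0$, $R=1$.
\begin{itemize}
\item \ref{ass:H1} holds, and $\Theta=-1$ is a stabilizer.
\item \eqref{ARE-infty} reads $-P^2=0$. Its only solution $P=0$ gives $\Theta_\infty=0$.
\item For $x\neq0$, $X^*\equiv x\notin L^2(0,\infty)$.
\item The infimum of $\mathbb E\int_0^\infty|u|^2ds$ over $\mathscr U_{ad}(0,x)$ is $0$ (use $u=-\varepsilon X$), but it is not attained.
\end{itemize}
So there is no static stabilizing solution and no optimal control.

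Your workarounds are circular, as you suspected. The input $v=u^*-\Theta X^*$ is in $L^2$ only if $X^*$ already is. The stabilizer's Lyapunov matrix ($\bar P=1/2$ here) is simply conserved along $X^*$. Separately, It\^o's formula with $P_T(\cdot)$ along $X^*$ gives the \emph{reverse} inequality: the cost of $u^*$ on $[0,T]$ is at least $\langle P_T(0)x,x\rangle$. The bound $\mathbb E\int_0^\infty[\langle QX^*,X^*\rangle+\langle Ru^*,u^*\rangle]ds\le\langle P_\infty x,x\rangle$ comes instead from It\^o's formula with the constant matrix $P_\infty\ge0$.

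Even when the proposition is true, your construction can pick the wrong solution. Take $A=1$ with the other data as above. Then $P_T\equiv0$ for every $T$, so $\lim_{T\to\infty}P_T(0)=0$. This solves $2P-P^2=0$ but is not stabilizing. The unique static stabilizing solution is $P=2$, and the value is $2x^2$. The finite-horizon problems do not see the constraint $X\in L^2$, so when $Q$ is degenerate the monotone limit need not be the stabilizing root.

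What is missing is a hypothesis tying $Q$ to stability. With $Q>0$ (the paper's \ref{ass:H1'}), your argument goes through:
\begin{itemize}
\item It\^o's formula for $\langle P_\infty X^*,X^*\rangle$ on $[0,N]$ gives $\lambda_{\min}(Q)\,\mathbb E\int_0^\infty|X^*|^2ds\le\langle P_\infty x,x\rangle$, so $\Theta_\infty$ is a stabilizer.
\item Your identity then gives \eqref{prop:infinite3}, uniqueness of $u^*$, and uniqueness of $P_\infty$ in $\mathbb S^n_+$.
\end{itemize}
With only $Q\ge0$, you need an additional detectability-type condition (for example an $L^2_Q$-type condition). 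By the first example, no citation can cover \ref{ass:H1} alone, so your fallback to the literature does not rescue the statement as written.
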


\ms

The following is the well-known \emph{Leray--Schauder fixed point theorem}  (see \cite[Theorem 9.12-3]{Ciarlet2024}),
which will play an important role in establishing the solvability of the delayed Riccati equation derived in this paper.

\begin{lemma}\label{lem:leray-schauder}
Let $X$ be a Banach space and let $f : X \times [0,1] \to X$ be a compact mapping with the following properties:
$$
f(x,0) = 0 \quad \text{for all } x \in X,
$$
and there exists $r>0$ such that
$$
\{ x \in X;\ f(x,\sigma) = x \text{ for some } 0 \les \sigma \les 1 \} \subset B(0;r).
$$
Then the mapping $f(\cdot,1) : X \to X$ has at least one fixed point in the closed ball $\overline{B(0;r)}$.
\end{lemma}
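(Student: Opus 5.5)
The statement is the classical Leray--Schauder theorem, and I would deduce it from the Schauder fixed point theorem by a cut-off argument rather than through degree theory. Throughout, "compact mapping" means that $f$ is continuous and maps bounded subsets of $X\times[0,1]$ into relatively compact subsets of $X$. Write $B_r:=B(0;r)$.

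\textbf{Step 1: the fixed-point set is compact and stays away from the sphere.} Set
$$F=\{x\in X:\ x=f(x,\sigma)\hbox{ for some }\sigma\in[0,1]\}.$$
By hypothesis $F\subset B_r$. Also $F$ is nonempty, since $f(0,0)=0$ gives $0\in F$.

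I claim $F$ is compact. Take a sequence $x_k=f(x_k,\sigma_k)$ in $F$. It lies in $f(\overline{B_r}\times[0,1])$, which is relatively compact, so along a subsequence $x_k\to x$ and $\sigma_k\to\sigma$. Continuity of $f$ gives $x=f(x,\sigma)$, so $x\in F$.

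Since $F$ is compact and contained in the open ball $B_r$, the sets $F$ and $X\setminus B_r$ are disjoint and at positive distance $\delta$ from each other.

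\textbf{Step 2: an Urysohn cut-off.} Define
$$\varphi(x)=\frac{\operatorname{dist}(x,X\setminus B_r)}{\operatorname{dist}(x,X\setminus B_r)+\operatorname{dist}(x,F)}.$$
The denominator never vanishes because the two sets are at positive distance. Hence $\varphi:X\to[0,1]$ is continuous, with $\varphi=1$ on $F$ and $\varphi=0$ on $X\setminus B_r$.

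Now set
$$g(x)=\begin{cases} f(x,\varphi(x)), & |x|\les r,\\ 0, & |x|>r.\end{cases}$$
The map $g$ is continuous. The only issue is at $|x|=r$, where $\varphi(x)=0$ and so $f(x,0)=0$ matches the outer branch. Moreover
$$g(X)\subset f(\overline{B_r}\times[0,1])\cup\{0\},$$
which is relatively compact. Let $K$ be the closed convex hull of this set; by Mazur's theorem $K$ is compact and convex, and $g$ maps $K$ into itself.

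\textbf{Step 3: apply Schauder and identify the fixed point.} The Schauder fixed point theorem yields $x\in K$ with $g(x)=x$.

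If $|x|\ges r$, then $g(x)=0$, because either $|x|>r$ or $\varphi(x)=0$ on the sphere. This forces $x=0$, contradicting $r>0$. Hence $|x|<r$ and $x=f(x,\varphi(x))$. By the definition of $F$ this means $x\in F$, so $\varphi(x)=1$ and therefore $x=f(x,1)$, with $x\in B_r\subset\overline{B(0;r)}$.

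\textbf{Expected obstacle.} The delicate point is Step 1: showing that $F$ is closed and at positive distance from $\partial B_r$, which is what makes $\varphi$ well defined and continuous. This is exactly where the compactness of $f$ is used.

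An alternative route is Leray--Schauder degree. By the hypothesis, $x-f(x,\sigma)\neq0$ on $\partial B_r$ for all $\sigma$, so homotopy invariance gives
$$\deg(I-f(\cdot,1),B_r,0)=\deg(I,B_r,0)=1,$$
and a zero exists. I would prefer the elementary Schauder argument above, since it avoids constructing the degree.
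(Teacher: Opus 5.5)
Your proof is correct, and it takes a genuinely different route from the paper, because the paper does not prove this lemma at all. It imports the result verbatim from Ciarlet's book (Theorem 9.12-3), whereas you derive it from Schauder's theorem via an Urysohn cut-off.

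Every step holds up:
\begin{itemize}
\item Compactness of $F$ uses exactly the relative compactness of $f(\overline{B(0;r)}\times[0,1])$ together with joint continuity of $f$.
\item Since $0=f(0,0)$, the set $F$ is nonempty, so $\mathrm{dist}(\cdot,F)$ makes sense.
\item Because $\varphi=0$ off $B(0;r)$ and $f(\cdot,0)\equiv0$, your $g$ is simply $x\mapsto f(x,\varphi(x))$ on all of $X$, which is why it is continuous.
\item Mazur's theorem supplies the invariant compact convex set $K$.
\item The closing identification, that $x\in F$ forces $\varphi(x)=1$, is exactly right.
\end{itemize}
One small correction to your ``expected obstacle'': for the denominator of $\varphi$ to be nonzero, you only need $F$ closed and disjoint from the closed set $X\setminus B(0;r)$. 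The positive distance is not needed.

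What your argument buys is self-containedness, a clear view of where the compactness hypothesis enters, and the slightly sharper conclusion that the fixed point lies in the open ball. What the citation buys is brevity. Full Banach-space generality is not actually used: in the only application, the proof of Theorem \ref{theorem:solvability-SERE}, the space is $X=\dbS^n$, which is finite-dimensional. There Schauder reduces to Brouwer, Mazur's theorem is elementary (Carath\'eodory), and every continuous $f$ is automatically compact. So the hypothesis that really has to be checked in that proof is the continuity of $(\Si,\sigma)\mapsto f(\Si,\sigma)$, not the boundedness the paper emphasizes. Continuity does hold, since $\Si\mapsto\Si_+$ is the Frobenius projection onto $\dbS^n_+$ and hence $1$-Lipschitz.
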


\section{Equilibrium strategy and verification theorem}\label{sec:VT}

Motivated by \cite{Ekeland-Lazrak-2010,Hu-2012,Bjork-2017,Yong2017}, we introduce the following definition of equilibrium strategy of Problem (LQ-Sub).

\begin{definition}\label{Def:ES}
We call $\bar\Th_{ES}$  an \emph{equilibrium strategy}, if
\bel{def-subequilibrium1}
\liminf_{\e\to 0^+} {\cJ(t,\bar X(t);u^\e(\cd) )-\cJ(t,\bar X(t);\bar\Th_{ES}\bar X(\cd) )\over\e}\ges 0,\ee
for any  $t\in[0,\infty)$ and $u(\cd)\in L_{\dbF}^2(\Om;\dbR^m)$, where $\bar X(\cd)$ is the unique solution to the following
closed-loop system:
\bel{def-subequilibrium3}
\left\{\begin{aligned}
   d\bar X(s) &= [A +B\bar\Theta_{ES}]\bar X(s)ds + [C+D\bar\Theta_{ES}]\bar X(s)dW(s), \qquad s\in[0,\infty),\\
    \bar X(0) &= x,
\end{aligned}\right.
\ee
and $u^\e(\cd)$ is the perturbed control process defined by
\bel{def-subequilibrium2}
u^\e(s)\deq\left\{\2n\ba{ll}
\ds\bar\Th_{ES}X^\e(s),\q&s\in[t+\e,t+T];\\
\ns\ds u(s),\q&s\in[t,t+\e),\ea\right.\ee
with $X^\e(\cd)\deq  X(\cd;t,\bar X(t),u^\e(\cd))$ being uniquely determined by
\bel{def-subequilibrium4}
\left\{\begin{aligned}
   dX^\e (s) &= [AX^\e (s) +Bu^\e(s)]ds + [C X^\e(s) +Du^\e(s)]dW(s), \qquad s\in[t,t+T],\\
    X^\e(t) &= \bar X(t).
\end{aligned}\right.
\ee

\end{definition}

\ms

\begin{remark}
Regarding Definition \ref{Def:ES}, three  points deserve emphasis.

\ms
\noindent
$\bullet$
The intuition behind this definition is  similar to that in \cite{Ekeland-Lazrak-2010,Hu-2012,Bjork-2017,Yong2017}.
It means that, at each time $t$, the controller is effectively engaged in a game against all her future selves. Once she follows the strategy $\bar\Th_{ES}$, she finds no reason to change it at any $t\in[0,\infty)$, because a unilateral deviation at any single instant would not yield a better outcome.

\ms
\noindent
$\bullet$
The equilibrium strategy is required to be \emph{time-invariant}.
This is because the time-invariance property of the optimal strategy is arguably the most important feature in infinite horizon
stochastic LQ problems, and we expect the equilibrium strategy to preserve this property.
This requirement introduces an essential difficulty.

\ms
\noindent
$\bullet$
Note that for any given initial time $t$, the current cost functional \rf{cost} is defined only over the finite horizon $[t,t+T]$. Hence, the equilibrium strategy $\bar\Th_{ES}$ is not necessarily a stabilizer of the system $[A,B;C,D]$.
\end{remark}

\subsection{Verification theorem}
Note that we can rewrite the cost functional \rf{cost} as the following form:
\begin{align}
\cJ(t,x;u(\cd)) &= \dbE\int_t^\infty \l(s-t) \[\lan QX(s),X(s)\ran+ \lan R u(s),u(s)\ran\]ds ,\label{cost-RW}
\end{align}
where
\bel{def-lambda}
\l(s)=\left\{
\begin{aligned}
&1,\qq s\in[0,T];\\
&0,\qq s\in(T,\infty).
\end{aligned}
   \right.
\ee
Then, motivated by the form of the equilibrium Riccati equation derived by Yong \cite{Yong2017} for the finite horizon problem, we introduce the following infinite horizon equilibrium Riccati equation:
\begin{align}
&{d\Si(s;t)\over ds}+\Si(s;t)[A+B\Psi(s)]+[A+B\Psi(s)]^\mathrm{T} \Si(s;t)+[C+D\Psi(s)]^\mathrm{T} \Si(s;t)\nn\\
&\q\times[C+D\Psi(s)]+\l(s-t)[Q+\Psi(s)^\mathrm{T} R\Psi(s)]=0,\qq 0\les t\les s<\infty,\label{P-Riccati-1}
\end{align}
where
\bel{P-strategy-1}
\Psi(s)=-[R+D^\mathrm{T} \Si(s;s)D]^{-1}[B^\mathrm{T} \Si(s;s)+D^\mathrm{T} \Si(s;s)C],\qq s\in[0,\infty).
\ee
Next, substituting $\l(\cd)$ into the above, we have
\bel{P-Riccati-2}
\left\{
\begin{aligned}
&{d\Si(s;t)\over ds}+\Si(s;t)[A+B\Psi(s)]+[A+B\Psi(s)]^\mathrm{T} \Si(s;t)\\
&\q\times[C+D\Psi(s)]+[Q+\Psi(s)^\mathrm{T} R\Psi(s)]=0,\qq 0\les t\les s<t+T<\infty,\\
&\Si(t+T;t)=0,\qq t\in[0,\infty).
\end{aligned}\right.
\ee
Note that the equilibrium strategy $\bar\Th_{ES}$ is required to be time-invariant.
Then $\Psi(\cd)$ should be independent of the time variable.
Thus, we should find a solution $\Si(\cd;\cd)$ to \rf{P-Riccati-2} with the additional property
that $\Si(s;s)\equiv \Si$ for some matrix $\Si\in\dbS^n_+$.
With this property, we have the following equivalent representation of \rf{P-Riccati-2}:
\bel{}
\Si(s;t)=\Si(s-t),
\ee
with $\Si(\cd)$ satisfying
 \bel{ERiccati}\left\{
\begin{aligned}
&\dot{\Si}(s)+\Si(s)[A+B\Psi]+[A+B\Psi]^\mathrm{T} \Si(s)+[C+D\Psi]^\mathrm{T} \Si(s)[C+D\Psi]\\
&\q+[Q+\Psi^\mathrm{T} R\Psi]=0,\qq 0\les  s \les T,\\
&\Si(T)=0,
\end{aligned}\right.
\ee
where
\bel{ERiccati-Psi}
\Psi=-[R+D^\mathrm{T} \Si(0)D]^{-1}[B^\mathrm{T} \Si(0)+D^\mathrm{T} \Si(0)C].
\ee

\ms
It is particularly remarkable that  $\dot{\Si}(s)$ depends on the delayed term $\Si(0)$ through $\Psi$, and
thus we call \rf{ERiccati}--\rf{ERiccati-Psi}  a \emph{delayed Riccati equation}.
By  time reversal, \rf{ERiccati} is equivalent to a
forward ODE with advanced arguments. To our best knowledge, it is the first time that
such type of Riccati equations is derived.

\ms

With the delayed Riccati equation \rf{ERiccati}--\rf{ERiccati-Psi} derived in the above,
the equilibrium strategy can be constructed in the following sense.

\begin{theorem}\label{theorem:verification}
Suppose that the delayed Riccati equation \rf{ERiccati}--\rf{ERiccati-Psi} admits a solution
$\Si(\cd)\in C(0,T;\dbS^n_+)$.
Then the linear feedback strategy $\bar\Theta_{ES}:=\Psi$, with $\Psi$ given by \rf{ERiccati-Psi},
is an equilibrium strategy.
\end{theorem}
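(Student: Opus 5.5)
Fix $t\in[0,\infty)$ and set $x_t:=\bar X(t)$; write $\bar\Theta:=\bar\Th_{ES}=\Psi$. The plan is to compute the cost difference in the definition of equilibrium exactly, using $\Si(\cd)$ as a quadratic value representation on $[t,t+T]$.

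\textbf{Step 1: value representation.} Since $\Psi$ is constant, \rf{ERiccati} is a linear Lyapunov-type ODE for the closed-loop coefficients $\bar A:=A+B\Psi$, $\bar C:=C+D\Psi$. For any $\tau\in[t,t+T]$ and any $\cF_\tau$-measurable square-integrable initial state $\xi$, let $X$ solve the closed-loop system on $[\tau,t+T]$ with $X(\tau)=\xi$. Applying It\^o's formula to $\lan \Si(s-t)X(s),X(s)\ran$ on $[\tau,t+T]$ and using $\Si(T)=0$ gives
$$
\dbE\int_\tau^{t+T}\lan (Q+\Psi^\mathrm{T} R\Psi)X(s),X(s)\ran ds=\dbE\lan \Si(\tau-t)\xi,\xi\ran .
$$
In particular, $\cJ(t,x_t;\bar\Theta\bar X(\cd))=\lan\Si(0)x_t,x_t\ran$, where the closed-loop state on $[t,t+T]$ coincides with $\bar X$ by uniqueness.

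\textbf{Step 2: expand the perturbed cost.} For $u^\e$, the running cost splits into the part on $[t,t+\e)$ and the part on $[t+\e,t+T]$. The latter equals $\dbE\lan\Si(\e)X^\e(t+\e),X^\e(t+\e)\ran$ by Step 1 with $\tau=t+\e$. Applying It\^o's formula on $[t,t+\e]$ to $\lan\Si(s-t)X^\e,X^\e\ran$ with the open-loop control $u$, and substituting $\dot\Si$ from \rf{ERiccati}, one obtains
$$
\cJ(t,x_t;u^\e)-\lan\Si(0)x_t,x_t\ran=\dbE\int_t^{t+\e}\Big[F(s,X^\e(s),u(s))-F(s,X^\e(s),\Psi X^\e(s))\Big]ds ,
$$
where
$$
F(s,x,v):=\lan\Si(s-t)(Ax+Bv),x\ran\cdot 2+\lan\Si(s-t)(Cx+Dv),Cx+Dv\ran+\lan Qx,x\ran+\lan Rv,v\ran .
$$
Completing the square gives
$$
F(s,x,u)-F(s,x,\Psi x)=\lan (R+D^\mathrm{T}\Si(s-t)D)(u-\Psi_s x),u-\Psi_s x\ran-\lan (R+D^\mathrm{T}\Si(s-t)D)(\Psi-\Psi_s)x,(\Psi-\Psi_s)x\ran ,
$$
with $\Psi_s:=-[R+D^\mathrm{T}\Si(s-t)D]^{-1}[B^\mathrm{T}\Si(s-t)+D^\mathrm{T}\Si(s-t)C]$. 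By \rf{ERiccati-Psi}, $\Psi_t=\Psi$.

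\textbf{Step 3: pass to the limit.} Divide by $\e$ and let $\e\to0^+$, interpreting $u\in L^2_\dbF(\Om;\dbR^m)$ as a control whose restriction near $t$ is $\cF_t$-measurable (or a general control, taking Lebesgue points). The first term is nonnegative because $\Si\ges0$ and $R>0$. For the second term, note that $\Si(\cd)$ is continuous, $X^\e\to x_t$ in $L^2$ uniformly on $[t,t+\e]$, and $\Psi_s\to\Psi$ as $s\downarrow t$; so it is $o(\e)$ after integration. This yields a $\liminf$ that is at least $\dbE\lan(R+D^\mathrm{T}\Si(0)D)(u-\Psi x_t),u-\Psi x_t\ran\ges0$.

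\textbf{Main obstacle.} The main obstacle is the careful $o(\e)$ bookkeeping. This requires (i) the moment estimate $\sup_{s\in[t,t+\e]}\dbE|X^\e(s)-x_t|^2\to0$, which follows from standard SDE estimates with $u$ square integrable, and (ii) handling a general progressively measurable $u$, which I would do via Lebesgue differentiation. The algebraic cancellation using $\Psi_t=\Psi$ is exactly where time-invariance of $\Si(s;s)\equiv\Si(0)$ is used.
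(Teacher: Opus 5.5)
Your proof is correct, but it takes a different route from the paper's. Both arguments begin the same way: It\^o's formula on the closed-loop part replaces the cost on $[t+\e,t+T]$ by $\dbE\lan\Si(\e)X^\e(t+\e),X^\e(t+\e)\ran$. From there the two diverge.

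\textbf{The paper's route.} It treats the remaining problem on $[t,t+\e]$ as a classical LQ problem with terminal weight $\Si(\e)$. Standard LQ theory then gives $\cJ(t,\bar X(t);u^\e(\cd))\ges\dbE\lan P_\e(t)\bar X(t),\bar X(t)\ran$, where $P_\e$ solves the classical Riccati equation with $P_\e(t+\e)=\Si(\e)$. The paper then compares this ODE with the Lyapunov equation satisfied by $\Si(\cd-t)$, which has the same terminal value, to get $|P_\e(t)-\Si(0)|\les K\e^2$. (The paper writes $\Si(\e)$ in \rf{theorem:verification-p9} and in this final estimate; $\Si(0)$ is what is meant.)

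\textbf{Your route.} You apply It\^o's formula to $\lan\Si(s-t)X^\e,X^\e\ran$ on $[t,t+\e]$ as well, and complete the square with the moving gain $\Psi_s$. This gives an exact identity in which the only possibly negative term is controlled by $|\Psi-\Psi_s|^2$, and that term is small precisely because $\Psi_t=\Psi$.

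\textbf{What each buys.} Your argument is self-contained: it needs neither the solvability theory of the classical Riccati equation nor the LQ verification theorem. The only positivity it uses is $R+D^\mathrm{T}\Si(s)D>0$ for $s$ near $0$, rather than the standard conditions behind the paper's appeal to LQ theory. It exhibits the first variation $\dbE\lan(R+D^\mathrm{T}\Si(0)D)(u-\Psi\bar X(t)),u-\Psi\bar X(t)\ran$ explicitly. Since $\Si(\cd)$ is $C^1$, your defect term is in fact $O(\e^3)$. The paper's route, in exchange, gives the lower bound $\cJ(t,\bar X(t);u^\e(\cd))-\cJ(t,\bar X(t);\bar\Th_{ES}\bar X(\cd))\ges-K\e^2\dbE|\bar X(t)|^2$ uniformly over all deviations $u$, because $P_\e(t)$ is an optimal value.

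\textbf{Two small points on your write-up.}
\begin{itemize}
\item Since $x_t=\bar X(t)$ is random, the ``in particular'' of Step 1 should read $\cJ(t,x_t;\bar\Th\bar X(\cd))=\dbE\lan\Si(0)x_t,x_t\ran$.
\item The Lebesgue-point argument you list as an obstacle is not needed for the theorem. Set $M_s:=R+D^\mathrm{T}\Si(s-t)D$. The square term is nonnegative, and the defect satisfies $\dbE\int_t^{t+\e}\lan M_s(\Psi-\Psi_s)X^\e,(\Psi-\Psi_s)X^\e\ran ds\les K\e\sup_{s\in[t,t+\e]}|\Psi-\Psi_s|^2\sup_{s\in[t,t+\e]}\dbE|X^\e(s)|^2=o(\e)$. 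Together these already give $\liminf\ges0$ for every square-integrable progressively measurable $u$. Identifying the limit is only needed for the sharper first-variation formula.
\end{itemize}
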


\begin{proof}
 For any fixed $t\in[0,\infty)$ and $u(\cd)\in L_{\dbF}^2(t,\infty;\dbR^m)$,
define the control process $u^\e(\cd\,,\cd)$ by \rf{def-subequilibrium2} with $0<\e<T$.
With the initial pair $(t,\bar X(t))$,
taking the control $u^\e(\cd\,,\cd)$, the corresponding state equation \rf{state} and cost functional \rf{cost} become
\bel{theorem:verification-p1}\left\{\begin{aligned}
   dX^\e(s) &= [AX^\e(s) +Bu(s)]ds + [C X^\e(s) +D u(s)]dW(s),\q s\in[t,t+\e); \\
   dX^\e(s) &= [A+B\bar\Th_{ES}] X^\e(s)ds +[C+D\bar\Th_{ES}]  X^\e(s)dW(s),\q s\in[t+\e,t+T], \\
    X^\e(t) &= \bar X(t),
\end{aligned}\right.\ee
and
\begin{align}
\cJ(t,\bar X(t);u^\e(\cd)) &= \dbE\Big[\int_t^{t+\e}\(\lan QX^\e(s),X^\e(s)\ran+ \lan Ru(s),u(s) \ran\)ds\nn\\
&\q+\int_{t+\e}^{t+T}\(\lan QX^\e(s),X^\e(s)\ran+ \lan R\bar\Th_{ES}X^\e(s),\bar\Th_{ES}X^\e(s) \ran\)ds\Big],\label{theorem:verification-p2}
\end{align}
respectively.
Applying  It\^{o}'s formula to the mapping $s\mapsto\lan \Si (s-t)X^\e(s),X^\e(s)\ran$ over $[t+\e,t+T]$, we have
\begin{align*}
&d\lan \Si (s-t)X^\e(s),X^\e(s)\ran=\[\lan \dot{\Si} (s-t)X^\e(s),X^\e(s)\ran\\
&\q+\lan \Si (s-t)[A+B\bar\Th_{ES}]X^\e(s),X^\e(s)\ran+\lan \Si (s-t)X^\e(s),[A+B\bar\Th_{ES}]X^\e(s)\ran\\
&\q+\lan \Si (s-t)[C+D\bar\Th_{ES}]X^\e(s),[C+D\bar\Th_{ES}]X^\e(s)\ran\]ds\\
&\q+\[\lan \Si (s-t)[C+D\bar\Th_{ES}]X^\e(s),X^\e(s)\ran+\lan \Si (s-t)X^\e(s),[C+D\bar\Th_{ES}]X^\e(s)\ran\] dW(s)\\
&=-\[\lan [Q+\bar\Th^\mathrm{T}_{ES}R \bar\Th_{ES}]X^\e(s),X^\e(s)\ran\]ds+\[\lan \Si (s-t)[C+D\bar\Th_{ES}]X^\e(s),X^\e(s)\ran\\
&\q+\lan \Si (s-t)X^\e(s),[C+D\bar\Th_{ES}]X^\e(s)\ran\] dW(s).
\end{align*}
Then,
\bel{theorem:verification-p7}
\dbE\int_{t+\e}^{t+T}\[\lan [Q+\bar\Th^\mathrm{T}_{ES}R \bar\Th_{ES}]X^\e(s),X^\e(s)\ran\]ds=\dbE\big[\lan\Si(\e) X^\e(t+\e),X^\e(t+\e)\ran\big].
\ee
Substituting the above into \rf{theorem:verification-p2} yields that
\begin{align}
\cJ(t,\bar X(t);u^\e(\cd)) &=\dbE\Big[\int_t^{t+\e}\(\lan QX^\e(s),X^\e(s)\ran+ \lan Ru(s),u(s) \ran\)ds\nn\\
&\qq+\lan\Si(\e) X^\e(t+\e),X^\e(t+\e)\ran\].\label{theorem:verification-p3}
\end{align}
Note that the problem with state equation \rf{theorem:verification-p1} and cost functional \rf{theorem:verification-p3}
is a classical finite horizon LQ optimal control problem.
By the standard results of stochastic LQ optimal problems (see Yong and Zhou \cite{Yong2012},
or example), we have
\bel{theorem:verification-p10}
\cJ(t,\bar X(t);u^\e(\cd)) \ges\dbE\big[ \lan P_\e(t)\bar X(t),\bar X(t)\ran\big],
\ee
where
\bel{theorem:verification-p4}\left\{
\begin{aligned}
&\dot{P}_\e(s)+P_\e(s)A+A^\mathrm{T} P_\e(s)+C^\mathrm{T} P_\e(s)C+Q- [P_\e(s)B+C^\mathrm{T} P_\e(s)D]\\
&\q\times[R+D^\mathrm{T} P_\e(s)D]^{-1}[B^\mathrm{T} P_\e(s)+D^\mathrm{T} P_\e(s)C ] ,\qq  t\les s\les t+\e,\\
&P_\e(t+\e)=\Si(\e).
\end{aligned}\right.
\ee
On the other hand,  applying  It\^{o}'s formula to the mapping $s\mapsto\lan \Si (s-t)\bar X(s),\bar X(s)\ran$ over $[t,t+T]$,
in a similar manner to \rf{theorem:verification-p7}, we have
\bel{theorem:verification-p9}
\cJ(t,\bar X(t);\bar\Th_{ES}\bar X(\cd) )=\dbE\big[\lan\Si(\e)\bar X(t),\bar X(t)\ran\big].
\ee
Then from \rf{theorem:verification-p10} and \rf{theorem:verification-p9}, we have
\begin{align}
&\cJ(t,\bar X(t);u^\e(\cd)) -\cJ(t,\bar X(t);\bar\Th_{ES}\bar X(\cd) )\nn\\
&\ges \dbE\big[ \lan P_\e(t)\bar X(t),\bar X(t)\ran-\lan\Si(\e)\bar X(t),\bar X(t)\ran\big].\label{theorem:verification-p12}
\end{align}
Note that $\Si(\cd)$ is bounded, and then $P_\e(\cd)$ is uniformly bounded.
Hence both $\Si(\cd)$ and $P_\e(\cd)$ are uniformly Lipschitz continuous.
Thus, there exists a constant $K>0$, which is independent of $\e$, such that
\begin{align}
|P_\e(s)-\Si(s-t)|&\les |P_\e(s)-P_\e(t+\e)|+|P_\e(t+\e)-\Si(s-t)|\nn\\
&= |P_\e(s)-P_\e(t+\e)|+|\Si(\e)-\Si(s-t)|\nn\\
&\les K\e,\qq \forall s\in[t,t+\e]. \label{theorem:verification-p13}
\end{align}
Note that $\Si(\cd-t)$ satisfies the following equation:
\bel{theorem:verification-p5}
\left\{
\begin{aligned}
&\dot{\Si}(s-t)+\Si(s-t)[A+B\Psi]+[A+B\Psi]^\mathrm{T} \Si(s-t)\\
&\q+[C+D\Psi]^\mathrm{T} \Si(s-t)[C+D\Psi]+[Q+\Psi^\mathrm{T} R\Psi]=0,\qq t\les  s \les t+\e,\\
&\Si(t+\e-t)=\Si(\e),
\end{aligned}\right.
\ee
with $\Psi$ given by \rf{ERiccati-Psi}.
Comparing \rf{theorem:verification-p4} and \rf{theorem:verification-p5},
by \rf{theorem:verification-p13} and the uniform boundedness of $P_\e(\cd)$ and $\Si(\cd)$, we get
$$
|P_\e(t)-\Si(\e )|\les K\int_t^{t+\e} \big[|P_\e(s)-\Si(s-t)|+|\Si(s-t)-\Si(0)|\big]ds\les K\e^2.
$$
Combining the above with \rf{theorem:verification-p12}, we get
\begin{align*}
&\liminf_{\e\to 0^+}{\cJ(t,\bar X(t);u^\e(\cd)) -\cJ(t,\bar X(t);\bar\Th_{ES}\bar X(\cd) )\over \e}\\
&\ges\liminf_{\e\to 0^+}{ \dbE\big[ \lan P_\e(t)\bar X(t),\bar X(t)\ran-\lan\Si(\e)\bar X(t),\bar X(t)\ran\big]\over \e}\\
&=0,
\end{align*}
which implies the local optimality of the equilibrium strategy $\bar\Th_{ES}$.
\end{proof}

\section{Solvability of delayed Riccati equation}\label{sec:solv}

Since the dependence of $\Si(0)$, the solvability of   \rf{ERiccati}--\rf{ERiccati-Psi} is not easy.
%
%
%
%
%
%
Before going further, let us  present the following simple and illustrative example.

\begin{example}\label{example1}
Consider the following linear backward delayed equation:
\bel{example-1}
\dot{\Si}(s)=-\Si(s)+{1\over 1-e}\Si(0),\qq s\in[0,1];\qq \Si(1)=1.
\ee
Clearly, if \rf{example-1} has a solution, then we have
$$
\Si(s)=e^{1-s}-\int_s^1 e^{r-s} dr {1\over 1-e}\Si(0),\qq s\in[0,1].
$$
Thus,
$$
\Si(0)=e-\int_0^1 e^r dr {1\over 1-e}\Si(0)=e+\Si(0),
$$
which yields a contradiction.
\end{example}

We see that \rf{example-1} is a very simple linear equation, yet it does not have a solution.
Since the derivative term $\dot{\Si}(s)$ depends on $\Si(0)$, it is influenced by the entire trajectory  of $\Si(\cd)$.
Hence the delayed backward equation (i.e., \rf{ERiccati}--\rf{ERiccati-Psi} or \rf{example-1})
is essentially a {\it Fredholm integral equation}. Thus, the solvability of \rf{ERiccati}--\rf{ERiccati-Psi} becomes much more difficult than that of classical Riccati equations, and the standard Banach fixed point theorem cannot be applied. We overcome the difficulty by introducing a new method.

\ms

The main result of this section is given as follows.

\begin{theorem}\label{theorem:solvability-SERE}
Let \ref{ass:H1} and \ref{ass:H2} hold.
Then the delayed Riccati equation \rf{ERiccati}--\rf{ERiccati-Psi} admits a solution $\Si(\cd)\in C(0,T;\dbS^n_+)$.
\end{theorem}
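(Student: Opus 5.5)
The equation reduces to a fixed point problem for the single matrix $\Si(0)$. I will run the Leray--Schauder argument (Lemma \ref{lem:leray-schauder}) on this finite-dimensional problem. For $M\in\dbS^n$, set $M^+\deq$ the projection of $M$ onto $\dbS^n_+$, and define
$$\Psi(M)=-[R+D^\mathrm{T} M^+D]^{-1}[B^\mathrm{T} M^++D^\mathrm{T} M^+C],$$
which is well defined and continuous because $R>0$. Let $\Si(\cd;M)$ be the solution of the \emph{linear} Lyapunov equation \rf{ERiccati} with $\Psi=\Psi(M)$ frozen. Since $Q+\Psi^\mathrm{T} R\Psi\ges0$, we have $\Si(s;M)\ges0$. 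Now define $f(M,\si)=\si\,\Si(0;M)$ on $X=\dbS^n\times[0,1]$. It is continuous, and it maps bounded sets into bounded sets, so it is compact because the space is finite-dimensional. Also $f(M,0)=0$. A fixed point $M=\Si(0;M)$ of $f(\cd,1)$ is automatically in $\dbS^n_+$, so $M^+=M$, and $\Si(\cd;M)$ solves \rf{ERiccati}--\rf{ERiccati-Psi} with values in $\dbS^n_+$.

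The whole proof then rests on a uniform a priori bound: every $M$ with $M=\si\Si(0;M)$, $\si\in[0,1]$, must satisfy $|M|\les r$ with $r$ independent of $\si$. Such an $M$ is positive semidefinite. Put $\G=M/\si\ges0$ (the case $\si=0$ is trivial). Then $\Psi=\Psi(\si\G)$ and $\G=\Si(0)$. The cost represented by $\G$ is the finite-horizon cost of the feedback $\Psi$, which is at most its infinite-horizon cost, but $\Psi$ need not be stabilizing, so that comparison gives nothing directly. My target is the bound $\Si(0)\les P_\infty$, or at least $\les$ a constant depending on a fixed stabilizer $\Th_0$ from \ref{ass:H2}. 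The $\si$-dependence is awkward here, so I may instead use the homotopy that interpolates $Q$ and $R$ (or scales $B,D$ by $\si$), keeping the structure of \rf{ERiccati-Psi} exact at every $\si$.

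To get the bound, integrate \rf{ERiccati} over $[0,T]$ after pairing with the closed-loop flow. This gives $\Si(0)=\int_0^T\Phi(s)^*[Q+\Psi^\mathrm{T} R\Psi]ds$, where $\Phi$ is the Lyapunov semigroup of the closed loop. Because $\Psi$ is optimal with respect to the terminal weight $\Si(0)$ itself (a one-step greedy condition), $\Si(0)$ satisfies the algebraic inequality
$$\Si(0)A_\Psi+A_\Psi^\mathrm{T}\Si(0)+C_\Psi^\mathrm{T}\Si(0)C_\Psi+Q+\Psi^\mathrm{T} R\Psi=-\dot\Si(0)\les\ \cdots,$$
and one can check that $-\dot\Si(0)=\Si(0)$-generator$\,\ges0$-type relations hold, using $\Si(s)$ decreasing in $s$. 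Monotonicity in $s$ follows because $\dot\Si$ satisfies a homogeneous Lyapunov equation with terminal value $-(Q+\Psi^\mathrm{T}R\Psi)\les0$. Hence $\dot\Si\les0$, and $G:=-\dot\Si(0)\ges0$ gives
$$\Si(0)A+A^\mathrm{T}\Si(0)+C^\mathrm{T}\Si(0)C+Q-\cdots(R+D^\mathrm{T}\Si(0)D)^{-1}\cdots=G\ges0 \quad\text{(with the minimizing }\Psi).$$
This is where the comparison lemma comes in. The aim is to show that any $\Si\ges0$ satisfying $\mathcal R(\Si)\ges0$ — the Riccati operator evaluated at a positive semidefinite matrix — must satisfy $\Si\les P_\infty$. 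I would prove it by comparing with the stabilizer $\Th_\infty$: writing $\mathcal R(\Si)\les$ the Lyapunov expression with $\Th_\infty$ and subtracting \rf{ARE-infty} gives $(\Si-P_\infty)A_\infty+A_\infty^\mathrm{T}(\Si-P_\infty)+C_\infty^\mathrm{T}(\Si-P_\infty)C_\infty\ges0$ with $[A_\infty;C_\infty]$ $L^2$-stable. $L^2$-stability then forces $\Si-P_\infty\les0$. For the $\si$-homotopy the same argument yields a bound by the corresponding $P_\infty^\si$, which is uniformly bounded (for instance by the cost of $\Th_0$).

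The main obstacle is this a priori estimate: both the sign $\dot\Si\les0$ and the matrix comparison have to be handled carefully, and the Riccati expression must be evaluated at the minimizing feedback while the equation uses $\Psi$ defined from $\Si(0)$. The argument works precisely because $\Psi$ is built from $\Si(0)$ itself. Once the estimate holds, Lemma \ref{lem:leray-schauder} gives the fixed point, and hence the solution $\Si(\cd)\in C(0,T;\dbS^n_+)$.
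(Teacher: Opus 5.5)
\textbf{There is a genuine gap: the a priori estimate is never connected to the homotopy you actually use.} Your ingredients are the paper's: Leray--Schauder on $\dbS^n$, a truncation to $\dbS^n_+$ so that $\Psi$ is defined everywhere, and the estimate $\mathcal{R}(\Si(0))=-\dot\Si(0)=\cT_T(Q+\Psi^\mathrm{T}R\Psi)\ges0$ followed by comparison with $P_\infty$ via the stabilizer $\Th_\infty$ (Proposition \ref{bound-Gamma} and Lemma \ref{comparison}).

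For $f(M,\si)=\si\,\Si(0;M)$, a fixed point is not a solution of a delayed Riccati equation: $\Psi$ minimizes for $M=\si\Si(0)$, not for $\Si(0)$. The target you set, $\Si(0)=M/\si\les P_\infty$, is false in general. Along fixed points with $\si\to0^+$ one has $M\to0$ and $\Psi\to0$. So $\Si(0;M)$ tends to $\int_0^T\cT^0_r(Q)\,dr$, the $T$-horizon cost of the zero control, where $\cT^0$ is the semigroup of $[A;C]$. For $A=B=Q=R=T=1$, $C=D=0$ this cost is $(e^2-1)/2$, which exceeds $P_\infty=1+\sqrt2$.

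Your fallback homotopies do not repair this:
\begin{itemize}
\item Scaling $B,D$ by $\si$ gives $f(\cd,0)=\int_0^T\cT^0_r(Q)\,dr\neq0$, which violates the hypothesis $f(\cd,0)=0$ of Lemma \ref{lem:leray-schauder}. It also destroys stabilizability at $\si=0$, so $P^\si_\infty$ is not uniformly bounded; the stabilizer $\Th_0/\si$ has cost of order $\si^{-2}$.
\item Interpolating $Q,R$ either leaves $\Psi^\mathrm{T}R\Psi$ alive at $\si=0$, so again $f(\cd,0)\neq0$, or lets $R$ degenerate, which breaks the continuity of $\Psi$.
\end{itemize}

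\textbf{The fix is one line: apply the comparison lemma to $M$ rather than to $\Si(0)$.} Let $M=\si\Si(0;M)$ with $\si\in(0,1]$. Then $M\ges0$, so $M^+=M$ and $\Psi$ is the minimizer for $M$. Put $G=Q+\Psi^\mathrm{T}R\Psi$, and let $\cL,\cT$ be the generator and semigroup of $[A+B\Psi;C+D\Psi]$. Identity \eqref{eq:squares} with $P=M$, $Z=\Psi$, together with $\cL(\Si(0;M))=\cT_T(G)-G$, gives
$$
\mathcal{R}(M)=\cL(M)+G=\si\bigl[\cT_T(G)-G\bigr]+G=\si\,\cT_T(G)+(1-\si)G\ges0 .
$$
Hence $0\les M\les P_\infty$ uniformly in $\si$, and your argument closes with $r=|P_\infty|+1$.

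The paper reaches the same end with a homotopy that keeps the exact delayed structure: it shrinks the horizon to $[0,\si T]$. Then $f(\cd,0)=0$ is automatic, and every fixed point is $\Si(0)$ for the delayed Riccati equation on $[0,\si T]$. Proposition \ref{bound-Gamma} applies verbatim, since $P_\infty$ does not depend on the horizon.
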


\begin{remark}
Under \ref{ass:H1} and \ref{ass:H2}, the uniqueness of solutions to the delayed Riccati equation \rf{ERiccati}--\rf{ERiccati-Psi}
still does not hold. The following is a simple example.
\end{remark}

\begin{example}\label{example:non-unique}
Consider the delayed Riccati  equation \rf{ERiccati}--\rf{ERiccati-Psi} with the parameter setting:
$$
A = 1,\quad B = 1,\quad C =1, \quad D = 1, \quad Q = 0.01,\quad R = 1,\quad T = 1.
$$
One can easily verify that Assumptions \ref{ass:H1} and \ref{ass:H2} are satisfied.
Then, the equation \rf{ERiccati}--\rf{ERiccati-Psi} can be rewritten as follows:
 \bel{ERiccati-example1}\left\{
\begin{aligned}
&\dot{\Si}(s)+2\Si(s)[1 - \Si(0)]+[0.01+ \Si(0)^2]=0,\qq 0\les  s \les 1,\\
&\Si(1)=0,
\end{aligned}\right.
\ee
Direct integration of the above differential equation shows that the initial value $\Si(0)$ satisfies the following scalar algebraic equation:
$$
f(\Si(0)) = 0,
$$
where $f(\cd)$ is defined as
$$
f(p) = \int_0^1 e^{2(1-p)r} \left(0.01 + p^2\right) dr - p =
\begin{cases}
\displaystyle \frac{0.01 + p^2}{2(1-p)} \left(e^{2(1-p)} - 1\right) - p, & p \neq 1,\\[1em]
0.01, & p = 1.
\end{cases}
$$
Evaluating the function at selected points yields
$$
f(0) \approx 0.0319 > 0,\quad
f(0.1) \approx -0.0439 < 0,\quad
f(1) = 0.01 > 0,\quad
f(2) \approx -0.2663 < 0.
$$
By the intermediate value Theorem, $f(p) = 0$ has one root in each of the intervals $(0,0.1)$, $(0.1,1)$, and $(1,2)$.
Each root corresponds to a distinct initial value $\Si(0)$ and thereby generates a unique solution $\Si(\cdot)$ to the original Riccati equation, which is given by
$$
\Si(s)=\int_s^1 e^{2(1-\Si(0))r} \left(0.01 + \Si(0)^2\right) dr,\qq s\in[0,1].
$$
 This means the considered equation has at least three positive solutions.
\end{example}

\begin{remark}
Without Assumption \ref{ass:H2}, the solvability of the delayed Riccati equation \rf{ERiccati}--\rf{ERiccati-Psi} may fail to hold. This indicates that the $L^2$-stabilizability condition is necessary for its solvability.
We give an example  below.
\end{remark}

\begin{example}\label{example:no_solution}
Consider the  delayed Riccati equation \rf{ERiccati}--\rf{ERiccati-Psi} with the parameter setting:
\begin{align}
& A = A_c = \begin{pmatrix}
1 & 1 \\[2pt]
0 & c
\end{pmatrix},\quad
B = \begin{pmatrix}
1  \\[2pt]
0
\end{pmatrix},\quad
C = \begin{pmatrix}
0 & 0 \\[2pt]
0 & 0
\end{pmatrix},\nn\\
&D = \begin{pmatrix}
0  \\[2pt]
0
\end{pmatrix},\quad
Q = \begin{pmatrix}
1 & 0 \\[2pt]
0 & 1
\end{pmatrix},\quad
R = 1 ,\quad T = 1.\label{example:no_solution1}
\end{align}
The parameter setting in \eqref{example:no_solution1} satisfies Assumption \ref{ass:H1}, but fails to satisfy the
$L^2$-stabilizability condition stated in Assumption \ref{ass:H2}.
As demonstrated in the Appendix, there exists a positive constant $c$ such that the delayed Riccati equation \eqref{ERiccati}--\eqref{ERiccati-Psi}  does not have a  solution.
\end{example}

Combining Theorem \ref{theorem:verification} and Theorem \ref{theorem:solvability-SERE}, we have the following result.

\begin{theorem}
Let \ref{ass:H1} and \ref{ass:H2} hold.
Then
$$
\bar\Theta_{ES}:=-[R+D^\mathrm{T} \Si(0)D]^{-1}[B^\mathrm{T} \Si(0)+D^\mathrm{T} \Si(0)C],
$$
is an  equilibrium strategy of Problem (LQ-Sub),
where $\Si(\cd)\in C(0,T;\dbS^n_+)$ is a solution to the delayed Riccati equation \rf{ERiccati}--\rf{ERiccati-Psi}.
\end{theorem}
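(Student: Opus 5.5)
The statement is an immediate combination of the two preceding theorems, so the plan is short. The argument does not depend on which solution of \rf{ERiccati}--\rf{ERiccati-Psi} is selected; by Example \ref{example:non-unique} there may be several, and each of them yields an equilibrium strategy.

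First, under \ref{ass:H1} and \ref{ass:H2}, we invoke Theorem \ref{theorem:solvability-SERE}. It provides a solution $\Si(\cd)\in C(0,T;\dbS^n_+)$ of the delayed Riccati equation \rf{ERiccati}--\rf{ERiccati-Psi}.

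Before forming $\bar\Theta_{ES}$, we check that it is well defined. Since $\Si(0)\ges0$ and $R>0$, we have $R+D^\mathrm{T}\Si(0)D\ges R>0$. This matrix is therefore invertible, and the expression in the statement is a well-defined constant matrix coinciding with $\Psi$ in \rf{ERiccati-Psi}.

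Second, we apply Theorem \ref{theorem:verification} to this particular $\Si(\cd)$. Its only hypothesis is that \rf{ERiccati}--\rf{ERiccati-Psi} admits a solution in $C(0,T;\dbS^n_+)$, which was just established. The theorem then yields that $\bar\Theta_{ES}=\Psi$ is an equilibrium strategy in the sense of Definition \ref{Def:ES}.

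There is no genuine obstacle here: all the substantive work lies in the a priori estimate and Leray--Schauder argument behind Theorem \ref{theorem:solvability-SERE}, and in the Itô-formula and $O(\e^2)$ comparison inside Theorem \ref{theorem:verification}. The only point requiring care is the invertibility remark above, together with matching the positive semidefiniteness and continuity class of the solution supplied by the first theorem to the hypothesis of the second.
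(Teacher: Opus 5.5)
Your proposal is correct and matches the paper's argument: the paper obtains this theorem by combining Theorem \ref{theorem:solvability-SERE}, which gives a solution $\Si(\cd)\in C(0,T;\dbS^n_+)$ under \ref{ass:H1} and \ref{ass:H2}, with the verification Theorem \ref{theorem:verification}. Your remarks that $R+D^\mathrm{T}\Si(0)D\ges R>0$ is invertible and that any positive semidefinite solution works are harmless extra checks consistent with the paper.
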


\subsection{Proof of  Theorem \ref{theorem:solvability-SERE}}

To prove Theorem \ref{theorem:solvability-SERE}, we first estbalish the following semigroup property of matrix-valued stochastic differential equation:

\begin{proposition}\label{lem:sde-semigroup}
Consider the following matrix-valued stochastic differential equation:
\begin{equation}\label{eq:phi-sde}
\begin{cases}
    d\Phi(s) = A\Phi(s)ds + C\Phi(s)dW(s),\qq s\ges 0,\\
    \Phi(0)=I_n,
\end{cases}
\end{equation}
where $A,C\in\dbR^{n\times n}$.
For any constant matrix $X\in\dbR^{n\times n}$, define the linear operator
\begin{equation}
\cT_s(X) = \dbE\bigl[ \Phi(s)^\mathrm{T} X \Phi(s) \bigr],\qquad s\ges 0.
\end{equation}
Then the operator family $\{\cT_s\}_{s\ges 0}$ satisfies:
\begin{enumerate}[1.]
\item $\cT_0(X)=X$ for all matrices $X$;
\item $\cT_{s+r} = \cT_s \circ \cT_r$ for all $s,r\ges 0$;
\item Each $\cT_s(\cd)$ is a bounded linear operator on $\dbR^{n\times n}$ with respect to the Frobenius norm.
\end{enumerate}
Therefore, $\{\cT_s(\cd)\}_{s\ges 0}$ forms a strongly continuous semigroup of bounded linear operators on $\dbR^{n\times n}$, whose infinitesimal generator is
\begin{equation}
\cL(X) := A^\top X + XA + C^\top X C.
\end{equation}
\end{proposition}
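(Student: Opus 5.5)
The plan is to prove items 1--3 directly and then identify the generator by a short Itô computation. Item 1 is immediate from $\Phi(0)=I_n$. Item 3 follows from linearity in $X$ together with the moment bound
$$
|\cT_s(X)|\les \dbE\big[|\Phi(s)|^2\big]\,|X|,
$$
and $\dbE[|\Phi(s)|^2]\les K e^{Ks}$ by the standard $L^2$ estimate for linear SDEs with constant coefficients. So each $\cT_s(\cd)$ is bounded, with operator norm locally bounded in $s$.

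\textbf{Semigroup property (item 2).} This is the main step, and I would use the flow property of the linear SDE. For fixed $r\ges0$, set $\Phi_r(s):=\Phi(r+s)\Phi(r)^{-1}$. Here $\Phi(r)$ is a.s. invertible, since its inverse solves a linear SDE as well.

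1. The process $\Phi_r(\cd)$ solves \rf{eq:phi-sde} driven by the shifted Brownian motion $\widetilde W(s)=W(r+s)-W(r)$. Hence $\Phi_r(s)$ has the same law as $\Phi(s)$ and is independent of $\cF_r$.
2. Write $\Phi(r+s)=\Phi_r(s)\Phi(r)$ and condition on $\cF_r$:
$$
\cT_{r+s}(X)=\dbE\Big[\Phi(r)^\mathrm{T}\,\dbE\big[\Phi_r(s)^\mathrm{T} X\Phi_r(s)\,\big|\,\cF_r\big]\,\Phi(r)\Big]=\dbE\big[\Phi(r)^\mathrm{T}\cT_s(X)\Phi(r)\big]=\cT_r(\cT_s(X)).
$$
3. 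This gives $\cT_{r+s}=\cT_r\circ\cT_s$. Exchanging the roles of $r$ and $s$, which is legitimate because both are arbitrary, gives $\cT_{s+r}=\cT_s\circ\cT_r$.

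The delicate point is justifying the conditional expectation in step 2. I would use the independence lemma: $\Phi(r)$ is $\cF_r$-measurable, $\Phi_r(s)$ is independent of $\cF_r$, and $\Phi_r(s)\overset{d}{=}\Phi(s)$.

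An alternative route avoids the flow. Itô's formula gives
$$
d\big(\Phi^\mathrm{T} X\Phi\big)=\Phi^\mathrm{T}\cL(X)\Phi\,ds+dM,
$$
with $M$ a martingale, so $Y(s):=\cT_s(X)$ satisfies the linear ODE $\dot Y=\cT_s(\cL(X))$. Showing this equals $\cL(Y)$ again requires the semigroup property, though. So the cleanest route is the matrix ODE on the finite-dimensional space $\dbR^{n\times n}$, carried out in steps:

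1. Apply Itô's formula to $\Phi(s)^\mathrm{T}X\Phi(s)$. The cross-variation term is $\Phi^\mathrm{T}C^\mathrm{T}XC\Phi$.
2. The stochastic integral is a true martingale, since $\dbE\int_0^s|\Phi|^4\,dr<\infty$ by the standard higher-moment bounds for linear SDEs. Taking expectations gives
$$
\cT_s(X)=X+\int_0^s\cT_r(\cL(X))\,dr .
$$
3. From step 2, $s\mapsto \cT_s$ is continuous in operator norm. This already gives strong continuity.
4. Letting $s\to0$ in the identity of step 2 gives $\frac{d}{ds}\cT_s(X)\big|_{s=0}=\cL(X)$. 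Hence $\cL$ is the generator.
5. Combined with the semigroup property, this yields $\cT_s=e^{s\cL}$ on the finite-dimensional space.

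The main obstacle is step 2 of the semigroup argument: rigorously establishing that $\Phi_r(s)$ is independent of $\cF_r$ with the same law as $\Phi(s)$. This follows from pathwise uniqueness of \rf{eq:phi-sde} and the fact that its solution is a measurable functional of the driving Brownian increments.
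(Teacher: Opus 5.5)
Your proposal is correct and is essentially the paper's proof: the semigroup law comes from the flow decomposition through the shifted Brownian motion together with conditioning on $\cF_r$, item 3 is a moment bound, and the generator is read off from It\^{o}'s formula after taking expectations. The paper writes the flow as $\Phi(s+r)=\tilde\Phi(r)\Phi(s)$, building $\tilde\Phi$ from the shifted noise and identifying it via uniqueness, which spares you the invertibility of $\Phi(r)$; and your remark that the flow-free route ``again requires the semigroup property'' is too pessimistic, since $\cT_s(X)=X+\int_0^s\cT_r(\cL(X))\,dr$ for all $X$ is the linear operator ODE $\frac{d}{ds}\cT_s=\cT_s\circ\cL$, $\cT_0=\mathrm{Id}$, on a finite-dimensional space, whose unique solution $e^{s\cL}$ gives the semigroup law with no independence argument at all.
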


The proof of Proposition \ref{lem:sde-semigroup} is given in the Appendix.
Next, we establish a  comparison theorem of algebraic equations.
Recall from Proposition \ref{prop:infinite} that the algebraic Riccati equation associated with the infinite horizon problem \rf{state} and \rf{cost-infty} is given by
\begin{align}
&A^\mathrm{T} P_\infty + P_\infty A + C^\mathrm{T} P_\infty C + Q- (B^\mathrm{T} P_\infty + D^\mathrm{T} P_\infty C)^\mathrm{T} \nn\\
&\qq \times(R + D^\mathrm{T} P_\infty D)^{-1} (B^\mathrm{T} P_\infty + D^\mathrm{T} P_\infty C) = 0. \label{eq:SARE}
\end{align}
Under \ref{ass:H1} and \ref{ass:H2}, the algebraic Riccati equation \rf{eq:SARE} admits a unique static stabilizing solution
$P_\infty\in\dbS^n_+$.
We define the Riccati operator $\mathcal{R}(\cdot)$ for any positive semidefinite matrix $P$ by
\begin{align}
\mathcal{R}(P) &:= A^\mathrm{T} P + PA + C^\mathrm{T} PC + Q- (B^\mathrm{T} P + D^\mathrm{T} P C)^\mathrm{T}\nn\\
&\qq \times (R + D^\mathrm{T} P D)^{-1}
(B^\mathrm{T} P + D^\mathrm{T} P C).
\label{eq:Ric}
\end{align}

\ms
The following \emph{comparison theorem} is the foundation for an a priori estimate of the delayed Riccati equation \rf{ERiccati}--\rf{ERiccati-Psi}.

\begin{lemma}\label{comparison}
For any positive semidefinite matrix $P$ satisfying $\mathcal{R}(P) \ges 0$, we have
$$P \les P_\infty,$$
where $P_\infty\in\dbS^n_+$ is the unique static stabilizing solution to the algebraic Riccati equation \rf{eq:SARE}.
\end{lemma}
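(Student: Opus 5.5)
Our plan is the standard Lyapunov comparison argument, with the stabilizer $\Th_\infty$ from \rf{def-SS-ARE} as the key tool. First, for any $P\in\dbS^n_+$ and any $\Th\in\dbR^{m\times n}$, completing the square gives the identity
\begin{align*}
&(A+B\Th)^\mathrm{T} P+P(A+B\Th)+(C+D\Th)^\mathrm{T} P(C+D\Th)+Q+\Th^\mathrm{T} R\Th\\
&\q=\mathcal{R}(P)+(\Th-\Th_P)^\mathrm{T}(R+D^\mathrm{T} PD)(\Th-\Th_P),
\end{align*}
where $\Th_P:=-(R+D^\mathrm{T} PD)^{-1}(B^\mathrm{T} P+D^\mathrm{T} PC)$. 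This is valid because $R+D^\mathrm{T} PD>0$ by \ref{ass:H1}. Applying the identity to $P_\infty$ with $\Th=\Th_\infty$ shows that
$$
(A+B\Th_\infty)^\mathrm{T} P_\infty+P_\infty(A+B\Th_\infty)+(C+D\Th_\infty)^\mathrm{T} P_\infty(C+D\Th_\infty)+Q+\Th_\infty^\mathrm{T} R\Th_\infty=0 .
$$
Applying it to the given $P$ with the same $\Th=\Th_\infty$, and using $\mathcal{R}(P)\ges0$, gives
$$
(A+B\Th_\infty)^\mathrm{T} P+P(A+B\Th_\infty)+(C+D\Th_\infty)^\mathrm{T} P(C+D\Th_\infty)+Q+\Th_\infty^\mathrm{T} R\Th_\infty\ges 0 .
$$

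Next we subtract. Set $\Delta:=P_\infty-P$, and let $\cL_\infty(X):=(A+B\Th_\infty)^\mathrm{T} X+X(A+B\Th_\infty)+(C+D\Th_\infty)^\mathrm{T} X(C+D\Th_\infty)$. Subtracting the two relations yields $\cL_\infty(\Delta)\les 0$, say $\cL_\infty(\Delta)=-M$ with $M\ges0$.

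We then use the semigroup of Proposition \ref{lem:sde-semigroup}, applied with $A+B\Th_\infty$ and $C+D\Th_\infty$ in place of $A$ and $C$. Let $\Phi$ be the corresponding fundamental matrix and $\cT_s(X)=\dbE[\Phi(s)^\mathrm{T} X\Phi(s)]$. Since $\cL_\infty$ is the generator, we have $\frac{d}{ds}\cT_s(\Delta)=\cT_s(\cL_\infty(\Delta))=-\cT_s(M)$; equivalently, this follows from It\^o's formula applied to $\lan \Delta X(s),X(s)\ran$ along the closed-loop system \rf{prop:infinite2} started at $x$. Integrating over $[0,S]$ gives
$$
\Delta=\cT_S(\Delta)+\int_0^S\cT_s(M)\,ds\ges \cT_S(\Delta),
$$
because $\cT_s$ preserves positive semidefiniteness.

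The main point, though a mild one, is to let $S\to\infty$ and show $\cT_S(\Delta)\to0$. Since $\Th_\infty$ is a stabilizer, $\dbE\int_0^\infty|\Phi(s)x|^2ds<\infty$ for every $x$. Because the system is time-invariant and linear, this $L^2$-stability implies exponential mean-square stability, $\dbE|\Phi(s)x|^2\les Ke^{-\delta s}|x|^2$ (cf.\ \cite{Huang-2015,Sun-Yong-2018}). If we prefer to avoid citing that, we can argue directly: $s\mapsto\cT_s(I)$ is integrable on $[0,\infty)$ and has a derivative bounded by a constant times $|\cT_s(I)|$, so $\cT_s(I)\to0$. Either way, $|\lan\cT_S(\Delta)x,x\ran|\les|\Delta|\,\dbE|\Phi(S)x|^2\to0$. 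Hence $\lan\Delta x,x\ran\ges0$ for all $x$, that is, $P\les P_\infty$.

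The only input from earlier in the paper is that $\Th_\infty$ is a stabilizer, which holds because $P_\infty$ is the static stabilizing solution of Proposition \ref{prop:infinite}. Note that the hypothesis $P\ges0$ enters only to make $\mathcal{R}(P)$ well defined through $R+D^\mathrm{T} PD>0$.
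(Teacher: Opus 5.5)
Your proposal is correct and takes the same route as the paper. Both arguments apply the completion-of-squares identity at $Z=\Th_\infty$ to $P$ and to $P_\infty$, subtract to get $(A+B\Th_\infty)^\mathrm{T}\Delta+\Delta(A+B\Th_\infty)+(C+D\Th_\infty)^\mathrm{T}\Delta(C+D\Th_\infty)\les0$ for $\Delta=P_\infty-P$, and use the $L^2$-stability of the closed loop under $\Th_\infty$ to conclude $\Delta\ges0$. The only difference is that the paper cites \cite[Proposition 3.5]{Huang-2015} or \cite[Lemma 2.2]{Sun-Yong-2018} for this last step, while you prove it yourself via $\Delta\ges\cT_S(\Delta)$ and $\cT_S(I)\to0$; that argument is valid, since integrability of the trace of $\cT_s(I)$ together with the bound of its derivative by a constant times itself does force the decay.
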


\begin{proof}
Denote
\begin{align*}
\Theta_P &= -(R + D^\mathrm{T} P D)^{-1}\big(B^\mathrm{T} P + D^\mathrm{T} P C\big),\\
\Theta_\infty &= -(R + D^\mathrm{T} P_\infty D)^{-1}\big(B^\mathrm{T} P_\infty + D^\mathrm{T} P_\infty C\big).
\end{align*}
For any matrix $Z\in\dbR^{m\times n}$ and positive semidefinite matrix $P\in\dbS^n_+$,
a direct algebraic manipulation yields the following completion-of-squares identity:
\begin{align}
&(A+BZ)^\mathrm{T} P + P(A+BZ) + (C+DZ)^\mathrm{T} P(C+DZ) + Q + Z^\mathrm{T} R Z \notag\\
&= A^\mathrm{T} P+PA+C^\mathrm{T} PC+Q + Z^\mathrm{T}(B^\mathrm{T} P+D^\mathrm{T} PC)+(PB+C^\mathrm{T} PD)Z \nn\\
&\qq+ Z^\mathrm{T}(R+D^\mathrm{T} PD)Z \notag\\
&= \mathcal{R}(P) + (B^\mathrm{T} P+D^\mathrm{T} PC)^\mathrm{T}(R+D^\mathrm{T} PD)^{-1}(B^\mathrm{T} P+D^\mathrm{T} PC) \notag\\
&\qq+ Z^\mathrm{T}(R+D^\mathrm{T} PD)Z + Z^\mathrm{T}(B^\mathrm{T} P+D^\mathrm{T} PC)+(PB+C^\mathrm{T} PD)Z \notag\\
&=\mathcal{R}(P) +\Theta_P^\mathrm{T}(R+D^\mathrm{T} PD)\Theta_P \notag+ Z^\mathrm{T}(R+D^\mathrm{T} PD)Z - Z^\mathrm{T} (R + D^\mathrm{T} P D)\Theta_P\\
&\qq -\Theta_P^\mathrm{T} \big(R + D^\mathrm{T} P D\big) Z \notag\\
&= \mathcal{R}(P) + (Z-\Theta_P)^\mathrm{T}\big(R + D^\mathrm{T} P D\big)(Z-\Theta_P).
\label{eq:squares}
\end{align}
Taking $Z=\Theta_\infty$ in \eqref{eq:squares}, we have
\begin{equation}\label{eq:Theta_infty}
\begin{aligned}
&(A+B\Theta_\infty)^\mathrm{T} P + P(A+B\Theta_\infty) + (C+D\Theta_\infty)^\mathrm{T} P(C+D\Theta_\infty) + Q + \Theta_\infty^\mathrm{T} R \Theta_\infty \\
& = \mathcal{R}(P) + (\Theta_\infty-\Theta_P)^\mathrm{T} (R + D^\mathrm{T} P D)(\Theta_\infty - \Theta_P) \ges 0,
\end{aligned}
\end{equation}
where the inequality is due to $\cR(P)\ges 0$.
Since $P_\infty$ solves $\mathcal{R}(P_\infty)=0$, taking $P=P_\infty$ and $Z=\Theta_\infty$ in \eqref{eq:squares} yields
\begin{equation}
(A+B\Theta_\infty)^\mathrm{T} P_\infty + P_\infty(A+B\Theta_\infty)
+ (C+D\Theta_\infty)^\mathrm{T} P_\infty(C+D\Theta_\infty)
+ Q + \Theta_\infty^\mathrm{T} R \Theta_\infty = 0.
\label{eq:Pinf_squares}
\end{equation}
Define the difference matrix $\Delta = P_\infty - P$.
Subtracting the above two equations \eqref{eq:Theta_infty} and \eqref{eq:Pinf_squares},
we have the Lyapunov-type equation for $\Delta$:
\begin{align*}
&(A+B\Theta_\infty)^\mathrm{T} \Delta + \Delta (A+B\Theta_\infty)
+ (C+D\Theta_\infty)^\mathrm{T} \Delta (C+D\Theta_\infty)\nn \\
& = -\mathcal{R}(P) - (\Theta_\infty - \Theta_P)^\mathrm{T} (R + D^\mathrm{T} P D)(\Theta_\infty - \Theta_P) \les 0.
\label{eq:Delta_eq}
\end{align*}
Since the system $[A+B\Theta_\infty,\,C+D\Theta_\infty]$ (i.e., the closed-loop system \rf{prop:infinite1}) is $L^2$-stable, we conclude that $\Delta \ges 0$ by \cite[Proposition 3.5]{Huang-2015} or  \cite[Lemma 2.2]{Sun-Yong-2018}.
\end{proof}

With Lemma \ref{comparison}, we have the following a priori estimate for the delayed Riccati equation \rf{ERiccati}--\rf{ERiccati-Psi}.

\begin{proposition}\label{bound-Gamma}
Let \autoref{ass:H1} and \autoref{ass:H2} hold. If the delayed Riccati equation \rf{ERiccati}--\rf{ERiccati-Psi} admits a solution $\Si(\cdot)$, then $0 \les \Si(0) \les P_\infty$, where $P_\infty\in\dbS^n_+$ is the unique static stabilizing solution to the algebraic Riccati equation \rf{eq:SARE}.
\end{proposition}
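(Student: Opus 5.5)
The plan is to reduce the claim to Lemma \ref{comparison}. It suffices to show that $\Si(0)\in\dbS^n_+$ and $\mathcal{R}(\Si(0))\ges 0$. The key observation is that once $\Psi$ is fixed, \rf{ERiccati} is merely a \emph{linear} Lyapunov-type ODE with constant coefficients. The delay then plays no role in the estimate, and everything can be read off from an explicit representation of $\Si(\cd)$.

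\emph{Step 1 (representation and sign).} Let $\Psi$ be the constant matrix in \rf{ERiccati-Psi}; it is well defined since $\Si$ is a solution. Let $\{\cT^\Psi_r\}_{r\ges0}$ be the semigroup of Proposition \ref{lem:sde-semigroup} associated with the closed-loop pair $[A+B\Psi;C+D\Psi]$, with generator $\cL_\Psi(X)=(A+B\Psi)^\mathrm{T} X+X(A+B\Psi)+(C+D\Psi)^\mathrm{T} X(C+D\Psi)$. Equation \rf{ERiccati} reads $\dot\Si(s)+\cL_\Psi(\Si(s))+M=0$ with $\Si(T)=0$ and $M:=Q+\Psi^\mathrm{T} R\Psi\ges0$ by \ref{ass:H1}. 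By uniqueness for linear ODEs and the semigroup property, I will verify that
$$
\Si(s)=\int_0^{T-s}\cT^\Psi_r(M)\,dr,\qq s\in[0,T].
$$
Since $\cT^\Psi_r(M)=\dbE[\Phi_\Psi(r)^\mathrm{T} M\Phi_\Psi(r)]\ges0$, this gives $\Si(s)\ges0$, in particular $\Si(0)\ges0$. Differentiating also gives $\dot\Si(s)=-\cT^\Psi_{T-s}(M)\les0$, so $\dot\Si(0)\les 0$.

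\emph{Step 2 (sign of $\mathcal{R}(\Si(0))$).} Evaluate \rf{ERiccati} at $s=0$:
$$
-\dot\Si(0)=\cL_\Psi(\Si(0))+Q+\Psi^\mathrm{T} R\Psi .
$$
Now apply the completion-of-squares identity \eqref{eq:squares} with $P=\Si(0)$ and $Z=\Psi$. Since $\Psi=\Theta_{\Si(0)}$ by \rf{ERiccati-Psi}, the quadratic remainder $(Z-\Theta_P)^\mathrm{T}(R+D^\mathrm{T} PD)(Z-\Theta_P)$ vanishes. The right-hand side is therefore exactly $\mathcal{R}(\Si(0))$, and so
$$
\mathcal{R}(\Si(0))=-\dot\Si(0)=\cT^\Psi_T(M)\ges0 .
$$

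\emph{Step 3.} Lemma \ref{comparison} now applies to $P=\Si(0)\in\dbS^n_+$ and yields $\Si(0)\les P_\infty$. Together with Step 1, this proves $0\les\Si(0)\les P_\infty$.

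The only point requiring care is Step 1: justifying the semigroup representation and the monotonicity $\dot\Si\les0$. Here the fact that $\Psi$ is a constant matrix, which is exactly the time-invariance built into the delayed equation, is what lets the autonomous semigroup of Proposition \ref{lem:sde-semigroup} be used directly. Step 2 is then the conceptual crux: at $s=0$ the delayed feedback coincides with the instantaneous minimizer, so the Lyapunov operator collapses to the Riccati operator $\mathcal{R}$.
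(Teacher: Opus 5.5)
Your proposal is correct and follows essentially the same route as the paper. Both arguments use the closed-loop semigroup representation to get $\Si(0)\ges 0$. Both then apply the completion-of-squares identity with $Z=\Psi=\Theta_{\Si(0)}$ to obtain $\mathcal{R}(\Si(0))=\cT_T(Q+\Psi^\mathrm{T} R\Psi)\ges 0$, and conclude with Lemma \ref{comparison}.

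The only cosmetic difference is how that identity is reached. You read it off as $-\dot\Si(0)$ by evaluating the ODE at $s=0$. The paper instead applies the generator to the integral representation and integrates $\frac{d}{dr}\cT_r$.
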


\begin{remark}
We remark that the above a priori estimate of $\Si(0)$ is independent of $T$.
From Theorem \ref{thm:limit}, which will be given later, we can also see that this estimate is sharp.

\end{remark}

\begin{proof}
Let $\Phi_{\Si}(\cd)$ be the solution to the following stochastic differential equation:
\begin{equation*}
\begin{cases}
d \Phi_{\Si}(t) =[A + B\Psi]\Phi_{\Si}(t)dt + [C + D\Psi]\Phi_{\Si}(t)d W(t),\\[4pt]
\Phi_{\Si}(0) = I,
\end{cases}
\end{equation*}
where $\Psi$ is defined by \rf{ERiccati-Psi}.
 Define the linear operator
\begin{equation*}
\cT_t(X) = \dbE\bigl[ \Phi_{\Si}(t)^\mathrm{T} X \Phi_{\Si}(t) \bigr],\qq \forall X\in\dbR^{n\times n}.
\end{equation*}

By Lemma \ref{lem:sde-semigroup}, $\{\cT_t\}_{t\ge 0}$ forms a strongly continuous semigroup on $\dbR^{n\times n}$, whose infinitesimal generator is given by $\cL(X) = (A+B\Psi)^\mathrm{T} X + X(A+B\Psi) + (C+D\Psi)^\mathrm{T} X (C+D\Psi)$.
Note that the solution $\Si(\cdot)$ to equation \eqref{ERiccati} satisfies
\begin{align*}
\Si(0)& =\dbE\int_0^T \Phi_\Si(r)^\mathrm{T} (Q + \Psi^\mathrm{T} R\Psi) \Phi_\Si(r)dr\\
&= \int_0^T \cT_{r}\bigl(Q +\Psi^\mathrm{T} R\Psi\bigr) dr \ges 0.
\end{align*}
Applying the generator $\cL$ to both sides, we obtain
\begin{equation*}
\cL\bigl(\Si(0)\bigr) = \int_0^T \cL\bigl(\cT_{r}\bigl(Q +\Psi^\mathrm{T} R\Psi\bigr)\bigr) dr.
\end{equation*}
Since $\cL$ is the infinitesimal generator of $\{\cT_t\}$, it holds that $\cL\circ\cT_{r} = \frac{d}{dr}\cT_{r}$. Thus,
\begin{equation}\label{eq:Gamma0-L}
\cL\bigl(\Si(0)\bigr) = \int_0^T \frac{d}{dr}\cT_{r}\bigl(Q +\Psi^\mathrm{T} R\Psi\bigr) dr
= \cT_T\bigl(Q +\Psi^\mathrm{T} R\Psi\bigr) -(Q +\Psi^\mathrm{T} R\Psi).
\end{equation}
From \eqref{eq:Ric} and \rf{eq:Gamma0-L}, we have
\begin{align*}
\mathcal{R}\bigl(\Si(0) \bigr) &=
 A^\mathrm{T} \Si(0) + \Si(0) A + C^\mathrm{T} \Si(0) C + Q\nn\\
 &\qq - (B^\mathrm{T} \Si(0)  + D^\mathrm{T} \Si(0)  C)^\mathrm{T} (R + D^\mathrm{T} \Si(0)  D)^{-1}
(B^\mathrm{T} \Si(0)  + D^\mathrm{T} \Si(0)  C)\nn\\
&=  A^\mathrm{T} \Si(0) + \Si(0) A + C^\mathrm{T} \Si(0) C + Q - \Psi^\mathrm{T} (R + D^\mathrm{T} \Si(0) D)^{-1} \Psi\nn \\
&=  (A+B\Psi)^\mathrm{T} \Si(0) + \Si(0)(A+B\Psi) + (C+D\Psi)^\mathrm{T} \Si(0) (C+D\Psi)\nn \\
&\qq+ Q + \Psi^\mathrm{T} R\Psi \nn\\
&= \cL\bigl(\Si(0)\bigr) + Q + \Psi^\mathrm{T} R\Psi  \nn\\
&= \cT_T\bigl(Q + \Th^\mathrm{T} R\Th\bigr) \ges 0.
\label{eq:residual-Gamma}
\end{align*}
By Lemma \ref{comparison}, we immediately conclude $\Gamma(0) \les P_\infty$.
This completes the proof.
\end{proof}

For any given $\Si\in\dbS^n_+$, consider the following Lyapunov equation:
\bel{Lya-E}\left\{
\begin{aligned}
&\Si_s(s)+\Si(s)[A+B\Psi]+[A+B\Psi]^\mathrm{T}\Si(s)+[C+D\Psi]^\mathrm{T}\Si(s)[C+D\Psi]\\
&\qq+[Q+\Psi^\mathrm{T} R\Psi]=0,\qq 0\les s\les T,\\
&\Si(T)=0,
\end{aligned}\right.
\ee
with
\bel{}
\Psi=-[R+D^\mathrm{T} \Si D]^{-1}[B^\mathrm{T} \Si +D^\mathrm{T} \Si C].
\ee
Clearly, it admits a unique positive semidefinite solution $\Si(\cd)\in\dbS^n_+$.
Thus, the following mapping is well-defined:
\bel{mapping}
\Pi(\Si)=\Si(0),\qq \forall \Si\in\dbS^n_+.
\ee
Since $\mathbb{S}_+^n$ is a set but not a linear space, to apply the Leray--Schauder fixed point theorem (i.e., Lemma \ref{lem:leray-schauder}), we introduce the following decomposition method.

\begin{definition}
Let $\Si$ be a real symmetric matrix. Then $\Si$ can be uniquely decomposed as
$$
\Si =\Si_+ + \Si_-,
$$
where $\Si_+$ is positive semidefinite and $\Si_-$ is negative semidefinite.
If $\Si = F\Lambda F^\mathrm{T}$ is the spectral decomposition with $\Lambda = \operatorname{diag}(\lambda_1,\dots,\lambda_n)$, then
$$
\Si_+ = F \Lambda_+ F^\mathrm{T}, \qquad \Si_- = F \Lambda_- F^\mathrm{T},
$$
with $(\Lambda_+)_{ii} = \max(\lambda_i,0)$ and $(\Lambda_-)_{ii} = \min(\lambda_i,0)$.
\end{definition}

\ms
Finally, we are ready to present the proof of Theorem \ref{theorem:solvability-SERE}.

\begin{proof}[Proof of Theorem \ref{theorem:solvability-SERE}]
We now apply the Leray--Schauder fixed point theorem (i.e., Lemma \ref{lem:leray-schauder}) to prove the existence of a solution to the delayed Riccati equation \rf{ERiccati}--\rf{ERiccati-Psi}.
Define the Banach space $X = \dbS^n$, where $\dbS^n$ is the space of $n\times n$ symmetric matrices, equipped with the Frobenius norm $\|\Si\|_F$.

\ms

For any $\Si\in X$ and $\sigma\in[0,1]$, define the mapping
\begin{equation*}
f(\Si,\sigma) = \Si(0),
\end{equation*}
where $\Si(\cdot)\in\dbS^n_+$ satisfies the Lyapunov equation:
\bel{Riccati-Gamma-P}\left\{
\begin{aligned}
&\Si_s(s)+\Si(s)[A+B\Psi]+[A+B\Psi]^\mathrm{T}\Si(s)+[C+D\Psi]^\mathrm{T}\Si(s)[C+D\Psi]\\
&\qq+[Q+\Psi^\mathrm{T} R\Psi]=0,\qq 0\les s\les \sigma T,\\
&\Si(\si T)=0,
\end{aligned}\right.
\ee
with
\bel{Gamma-strategy-P}
\Psi=-[R+D^\mathrm{T} \Si_+ D]^{-1}[B^\mathrm{T} \Si +D^\mathrm{T} \Si C].
\ee
Note that in \rf{Gamma-strategy-P}, the singular term is given by $\Si_+$.
Under Assumption~\ref{ass:H1}, $R>0$ together with $\Si_+\ges 0$ implies that $R+D^\mathrm{T}\Si_+D \ges R>0$. Hence its inverse is uniformly bounded for bounded $\Si$, which in turn yields a uniform bound on $\Psi$ in \eqref{Gamma-strategy-P}. Consequently, all coefficients in the Lyapunov equation \eqref{Riccati-Gamma-P} are bounded uniformly with respect to $\Si$ and $\sigma$. Since the equation is linear and the interval $[0,\sigma T]\subseteq[0,T]$ is finite, its solution $\Si(\cdot)$ is uniformly bounded; in particular, $\Si(0)=f(\Si,\sigma)$ is bounded. Thus $f$ maps bounded sets into bounded sets and, as $X$ is finite-dimensional, is compact. Moreover, $f(\Si,0)=0$ for all $\Si\in X$ follows directly from the terminal condition $\Si(0)=0$ in \eqref{Riccati-Gamma-P}.

\ms
Assume that $f(\Si,\sigma) = \Si$ for some $\Si\in X$ and $\sigma\in[0,1]$. Then $\Si$ is positive semidefinite,
which implies that $\Si_+ =\Si$.
Thus, $\Si = \Si(0)$, where $\Si(\cd)$ satisfies \eqref{Riccati-Gamma-P} with $\Psi$ replaced by
\bel{Gamma-strategy-P*}
\Psi=-[R+D^\mathrm{T} \Si D]^{-1}[B^\mathrm{T} \Si +D^\mathrm{T} \Si C].
\ee
Then by \autoref{bound-Gamma}, there exists a constant $r>0$ such that $\{ \Si \in X;\ f(\Si,\sigma) = \Si \text{ for some } 0 \le \sigma \le 1 \} \subset B(0;r)$. Actually, we can take $r = \|P_\infty\|_F + 1$.
By the Leray--Schauder fixed point theorem, the mapping $f(\cdot,1)$ has at least one fixed point $\Si^*\in\dbS^n_+$ in $\overline{B(0;r)}$.
Then the solution $\Si(\cd)\in\dbS^n_+$ to \rf{Riccati-Gamma-P}
with $\Psi=-[R+D^\mathrm{T} \Si^* D]^{-1}[B^\mathrm{T} \Si^* +D^\mathrm{T} \Si^* C]$ and $\si=1$
is a solution to the delayed Riccati equation \rf{ERiccati}--\rf{ERiccati-Psi}.
\end{proof}

\section{The convergence behavior as $T\to \infty$}\label{sec:con}

Let $\Si_T(\cd)$ be a positive semidefinite solution to the delayed Riccati equation \rf{ERiccati}--\rf{ERiccati-Psi} with the terminal time $T$.
In this section, we shall investigate the limiting behavior of $\Si_T(\cd)$ as $T\to\infty$.

\begin{taggedassumption}{(H1')}\label{ass:H1'}
The weighting matrices of the cost functional \rf{cost} satisfy
$$
Q> 0, \q R>0.
$$
\end{taggedassumption}

\begin{theorem}\label{thm:limit}
Let \ref{ass:H1'} and \ref{ass:H2} hold. Then
\begin{equation}\label{thm:limit1}
\lim_{T\to\infty} \Si_T(0) = P_\infty,
\end{equation}
where $P_\infty\in\dbS^n_+$ is the unique   static stabilizing solution to
the algebraic Riccati equation \rf{ARE-infty}. Moreover, the equilibrium $\bar\Theta_{ES}:=\Psi$, where $\Psi$ is given by \rf{ERiccati-Psi}, has the following convergence property:
\begin{equation}\label{thm:limit2}
\lim_{T\to\infty} \bar\Theta_{ES} =-  [R+D^\mathrm{T} P_\infty D]^{-1}[B^\mathrm{T} P_\infty+D^\mathrm{T} P_\infty C]=\Th_\infty,
\end{equation}
where $\Th_\infty$ is the optimal feedback strategy of the infinite horizon problem.
\end{theorem}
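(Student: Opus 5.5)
Write $P_T:=\Si_T(0)$ and $\Psi_T:=-[R+D^\mathrm{T} P_TD]^{-1}[B^\mathrm{T} P_T+D^\mathrm{T} P_TC]$. By Proposition \ref{bound-Gamma}, $0\les P_T\les P_\infty$ for every $T>0$. The family $\{P_T\}$ is therefore bounded, and so is $\{\Psi_T\}$. The plan is a compactness argument: show that every limit point $\bar P$ of $P_T$ along a sequence $T_k\to\infty$ equals $P_\infty$. Convergence of the whole family then follows, and \rf{thm:limit2} follows from the continuity of $P\mapsto -[R+D^\mathrm{T} PD]^{-1}[B^\mathrm{T} P+D^\mathrm{T} PC]$ on $\dbS^n_+$, since $R>0$.

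The first step uses the identity established in the proof of Proposition \ref{bound-Gamma}:
$$\cR(P_T)=\cT^{T}_T\big(Q+\Psi_T^\mathrm{T} R\Psi_T\big),\qquad P_T=\int_0^T\cT^T_r\big(Q+\Psi_T^\mathrm{T} R\Psi_T\big)dr,$$
where $\cT^T_r(X)=\dbE[\Phi_T(r)^\mathrm{T} X\Phi_T(r)]$ and $\Phi_T$ solves the closed-loop SDE with feedback $\Psi_T$. Set $M_T:=Q+\Psi_T^\mathrm{T} R\Psi_T$. Because $Q>0$ by \ref{ass:H1'}, we have $M_T\ges Q\ges qI$ for some $q>0$. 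Hence $\cT^T_r(M_T)\ges q\,\dbE[\Phi_T(r)^\mathrm{T}\Phi_T(r)]$, and so
$$q\int_0^T\dbE|\Phi_T(r)|^2dr\les \tr P_T\les \tr P_\infty.$$
This bound is uniform in $T$.

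The key step, and the one I expect to be the main obstacle, is to show that $\cR(P_T)=\cT^T_T(M_T)\to0$. The integral bound controls only an average of $r\mapsto\dbE|\Phi_T(r)|^2$, not its value at $r=T$. To get a pointwise bound I plan to use the semigroup structure from Proposition \ref{lem:sde-semigroup}. Since $\Psi_T$ is bounded uniformly in $T$, the generator $\cL_T$ is uniformly bounded. Consequently $\dbE|\Phi_T(r)|^2\les e^{K(r-s)}\,\dbE|\Phi_T(s)|^2$ for $s\les r\les s+1$, up to a dimensional constant, by a Gronwall estimate on $\frac{d}{dr}\cT^T_r(I)=\cT^T_r(\cL_T(I))$ together with positivity of $\cT^T_{r-s}$. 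Integrating over $s\in[T-1,T]$ gives
$$\dbE|\Phi_T(T)|^2\les C\int_{T-1}^T\dbE|\Phi_T(s)|^2ds .$$
This alone does not force the right-hand side to tend to zero, so I would argue by contradiction via an exponential-decay bound. The estimate $\int_0^\infty$-type bound $\int_0^T\cT^T_r(I)\,dr\les C$ with $T$ large should imply a uniform contraction, namely $|\cT^T_{\tau}|\les 1/2$ for a fixed $\tau$. The intended route is that $\cT^T_\tau(I)\les \frac1\tau\int_0^\tau \cT^T_{\tau-s}\cT^T_s(I)\,ds$ combined with the one-step growth bound. Submultiplicativity $\cT^T_{a+b}=\cT^T_a\circ\cT^T_b$ then gives $|\cT^T_T(I)|\les C2^{-T/\tau}\to0$. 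As a fallback, I would instead pass to a subsequence with $\Psi_{T_k}\to\bar\Psi$ and show $\bar\Psi$ is a stabilizer, using Fatou's lemma on $\int_0^{T_k}\dbE|\Phi_{T_k}|^2$ and the continuous dependence of $\Phi$ on $\Psi$ over compact intervals. Stability of $[A+B\bar\Psi;C+D\bar\Psi]$ is an open condition, since it is characterized by a strict Lyapunov inequality. Hence $[A+B\Psi_{T_k};C+D\Psi_{T_k}]$ is uniformly exponentially stable for large $k$, and $\cT^{T_k}_{T_k}(M_{T_k})\to0$.

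Having $\cR(P_{T_k})\to0$, I pass to the limit and obtain $\cR(\bar P)=0$ with $\bar P\ges0$. The feedback $\Th_{\bar P}=\lim\Psi_{T_k}=\bar\Psi$ is a stabilizer by the previous step. So $\bar P$ is a static stabilizing solution of \rf{ARE-infty}, and the uniqueness in Proposition \ref{prop:infinite} yields $\bar P=P_\infty$. As an alternative route to the last identification, Lemma \ref{comparison} already gives $\bar P\les P_\infty$. For the reverse inequality, the Lyapunov identity \rf{eq:squares} with $Z=\bar\Psi$ expresses $\bar P$ as the cost $\cJ_\infty$ of the stabilizing feedback $\bar\Psi$, which is $\ges\lan P_\infty x,x\ran$ by \rf{prop:infinite3}.
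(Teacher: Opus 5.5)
Your proposal is correct, but it identifies the limit point by a different mechanism than the paper. The paper never shows that the residual $\cR(\Si_T(0))$ vanishes. After extracting $\Si_{T_k}(0)\to\bar P$, it truncates the integral representation at a fixed $S$, lets $k\to\infty$, and then lets $S\to\infty$. This gives $\dbE\int_0^\infty\Phi_{\bar\Psi}(r)^\mathrm{T}(Q+\bar\Psi^\mathrm{T} R\bar\Psi)\Phi_{\bar\Psi}(r)\,dr\les\bar P\les P_\infty$. With $Q>0$ this shows that $\bar\Psi$ is a stabilizer, which is exactly your fallback step. Read as a cost inequality, the same bound gives $\lan P_\infty x,x\ran\les\cJ_\infty(0,x;\bar\Psi X)\les\lan\bar P x,x\ran$ by \rf{prop:infinite3}. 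The squeeze with Proposition \ref{bound-Gamma} then finishes the proof. It needs no openness of stability, no uniform exponential decay, no $\cR(\bar P)=0$, and no uniqueness of the static stabilizing solution.

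Your route is longer: you show $\cT^T_T(M_T)\to0$, then use uniqueness in Proposition \ref{prop:infinite}, or Lemma \ref{comparison} plus the Lyapunov identity, which is legitimate only because $\cR(\bar P)=0$ is already in hand. In exchange it gives information the paper does not state. The residual decays, and $\Psi_T$ is itself a stabilizer for all sufficiently large $T$, via your $T$-independent contraction or via openness once $\Psi_T\to\Th_\infty$ is known. The paper only remarks that $\bar\Th_{ES}$ need not be stabilizing in general.

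One repair is needed in your main route. The displayed inequality $\cT^T_\tau(I)\les\frac1\tau\int_0^\tau\cT^T_{\tau-s}\cT^T_s(I)\,ds$ is actually an identity and proves nothing by itself. What makes the Datko-type argument work is a bound $\sup_{0\les r\les T}|\cT^T_r(I)|\les M$ with $M$ independent of $T$. This follows from your growth estimate averaged over the unit window ending at any $r\in[1,T]$, not only at $r=T$. Since $\cT^T_s(I)\les MI$, positivity of $\cT^T_{\tau-s}$ gives $\tau\,\cT^T_\tau(I)=\int_0^\tau\cT^T_{\tau-s}\big(\cT^T_s(I)\big)ds\les M\int_0^\tau\cT^T_u(I)\,du$. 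The right-hand side is controlled by your integral bound, so $|\cT^T_\tau|\les 1/2$ for some $\tau$ independent of $T$. Your fallback via the strict Lyapunov inequality is already complete as written.
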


\begin{proof}
For every $T>0$, by Proposition \ref{bound-Gamma}, we have
$$
0 \les \Si_T(0) \les P_\infty.
$$
By the Bolzano--Weierstrass theorem, there exists a $\Si_\infty\in\dbS^n_+$ and a sequence $T_k\to\infty$ such that
\begin{equation*}
\lim_{k\to\infty} \Si_{T_k}(0) =\Si_\infty\quad\text{with }0\les \Si_\infty\les P_\infty.
\end{equation*}
Set
\begin{equation*}
\begin{cases}
d \Phi_k(s) = [A+B\Theta_k] \Phi_k(s)ds + [C+D\Theta_k] \Phi_k(s)dW(s),\qq s\in[0,T_k],\\
\Phi_k(0) = I,\\
\Psi_k = -[R+D^\mathrm{T} \Si_{T_k}(0) D]^{-1}[B^\mathrm{T} \Si_{T_k}(0) +D^\mathrm{T}\Si_{T_k}(0) C].
\end{cases}
\end{equation*}
%
For any fixed $S>0$ and sufficiently large $k$ with $T_k>S$, by the integral representation of $\Si_{T_k}(0)$ we have
\begin{align*}
\Si_{T_k}(0)&=\dbE \int_0^{T_k} \Bigl[\Phi_k(s)^\mathrm{T} (Q+\Psi_k R \Psi_k) \Phi_k(s)\Bigr]ds \notag \\
&\ges \dbE\int_0^{S} \Bigl[\Phi_k(s)^\mathrm{T} (Q+\Psi_k R \Psi_k) \Phi_k(s)\Bigr]ds.
\end{align*}
Taking the limit $k\to\infty$ and applying the dominated convergence theorem, we obtain
\begin{equation*}
\Si_\infty \ges \dbE\int_0^S \Bigl[\Phi_\infty (s)^\mathrm{T} (Q+\Psi_\infty R \Psi_\infty) \Phi_\infty(s)\Bigr]ds,
\end{equation*}
where $\Phi_\infty$ is the unique solution to the following closed-loop system:
\begin{equation*}
\begin{cases}
d \Phi_\infty(s) = [A+B\Psi_\infty] \Phi_\infty(s)ds + [C+D\Psi_\infty] \Phi_\infty(s)dW(s),\qq s\in[0,\infty),\\
\Phi_\infty(0) = I,
\end{cases}
\end{equation*}
with
\bel{thm:limit-p1}
\Psi_\infty = -[R+D^\mathrm{T} \Si_\infty D]^{-1}[B^\mathrm{T} \Si_\infty +D^\mathrm{T} \Si_\infty C].
\ee
Since $S$ is arbitrary, we conclude that
\begin{equation}\label{thm:limit-p2}
\dbE\int_0^\infty\Bigl[\Phi_\infty (s)^\mathrm{T} (Q+\Psi_\infty R \Psi_\infty) \Phi_\infty(s)\Bigr]ds \les \Si_\infty \les P_\infty.
\end{equation}
Moreover, for any $x\in\dbR^n$, the unique solution of the following closed-loop system is given by $X(\cd)=\Psi_\infty(\cd)x$:
\begin{equation*}
\begin{cases}
d X(s)= [A+B\Psi_\infty] X(s)ds + [C+D\Psi_\infty] X(s)dW(s),\\
X(0)=x.
\end{cases}
\end{equation*}
Thus, from \rf{thm:limit-p2} we have
$$
\d \dbE\int_0^\infty |X(s)|^2 \les \dbE\int_0^\infty\Bigl[\Phi_\infty (s)^\mathrm{T} (Q+\Psi_\infty R \Psi_\infty) \Phi_\infty(s)\Bigr]ds \les \Si_\infty \les P_\infty,
$$
for some $\d>0$, in which the first inequality is due to \ref{ass:H1'}.
This means that the strategy $\Psi_\infty$, defined by \rf{thm:limit-p1}, is a stabilizer of the system $[A,B;C,D]$.
Thus, the control process $u_\infty(\cd):=\Psi_\infty X(\cd)$ belongs to $\sU_{ad}(0,x)$.
Recall from Proposition \ref{prop:infinite} that $\lan P_\infty x,x\ran$ is the value function. Then we have
\begin{align*}
\lan P_\infty x,x\ran&=\inf_{u(\cd)\in\sU_{ad}(t,x)}\cJ_\infty(0,x;u(\cd))\les \cJ_\infty(0,x;u_\infty(\cd)) \\
&= \dbE\int_0^\infty \Bigl[[\Phi_\infty (s)x]^\mathrm{T} (Q+\Psi_\infty R \Psi_\infty) [\Phi_\infty(s)x]\Bigr]ds \\
&\les \lan\Si_\infty x,x\ran\les\lan P_\infty x,x\ran, \qq \forall x\in\dbR^n,
\end{align*}
which implies
\begin{equation*}
\Si_\infty=P_\infty.
\end{equation*}
Since $\{\Si_T(0)\}_{T>0}$ is compact and the limit of any convergent subsequence of $\{\Gamma(0;T)\}_{T>0}$ equals $P_\infty$, we obtain $\lim\limits_{T\to\infty} \Si_T(0) = P_\infty$.
The convergence \rf{thm:limit2} can be obtained easily.
\end{proof}

\begin{remark}
The assumption \ref{ass:H1'} is slightly stronger than the standard condition \ref{ass:H1}.
We believe that Theorem \ref{thm:limit} remains valid under \ref{ass:H1}
provided one introduces the so-called $L^2_Q$-stabilizability as in Huang, Li, and Yong \cite{Huang-2015}.
We leave this case to the interested reader.
\end{remark}

\section{Concluding}\label{sec:conclu}
The main contribution of this paper is that we provide a constructive method for the equilibrium strategy of Problem (LQ-Sub).
Interestingly, the equilibrium strategy retains the important time-invariance property of optimal feedback strategies
in infinite horizon optimal control problems. The main mathematical innovation is that the equilibrium Riccati equation associated with Problem (LQ-Sub)
turns out to be a delayed backward Riccati equation, which is interesting in its own right.
Being essentially a Fredholm integral equation,
this type of equation poses a fundamental challenge to establishing its solvability.
We overcome this difficulty by introducing some new ideas.
We believe that similar ideas can be applied to other problems,
such as sub-infinite horizon nonlinear control problems, infinite horizon time-inconsistent control problems, among others.
We will report the related results in the near future.

\section*{Appendix}

\subsection*{Proof of Proposition \ref{lem:sde-semigroup}}
Recall that $\dbF\equiv\{\cF_s\}_{s\ges0}$ is the natural filtration generated by the Brownian motion $W(\cd)$.
For fixed $s\ges 0$, define $\tilde W(r) := W(s+r)-W(s)$ with $r\ges 0$. By the independent increment property of Brownian motion, $\tilde W(\cd)$ is a standard Brownian motion independent of $\cF_s$.

\ms
Let $\tilde\Phi(\cd)$ solve the stochastic differential equation
$$
\left\{
\begin{aligned}
d\tilde\Phi(r) &= A\tilde\Phi(r)dr + C\tilde\Phi(r)d\tilde W(r),\qq r\ges 0,\\
 \tilde\Phi(0)&=I.
\end{aligned}
\right.
$$
It follows that $\tilde\Phi(r)$ is  independent of $\Phi(s)$, and identically distributed with $\Phi(r)$.
Define the process $Z(r) = \tilde\Phi(r-s)\Phi(s)$ for $r\ges s$.
Clearly, $Z(r)$ is adapted to $\cF_r$ and satisfies
\begin{equation}\label{eq:Z-sde}
\left\{\begin{aligned}
dZ(r)& = A Z(r)dr + C Z(r)d W(r),\qq r\ges s,\\
 Z(s)&=\Phi(s).
\end{aligned}\right.
\end{equation}
By the strong uniqueness of  solutions to \rf{eq:Z-sde}, we obtain $Z(r)=\Phi(r)$ almost surely.
This yields the decomposition
\bel{lem:sde-semigroup-p1}
\Phi(s+r)=\ti\Phi(r)\Phi(s),\qq \forall s,r\ges 0.
\ee

(1) Since $\Phi(0)=I$, we have
\begin{equation*}
\cT_0(X)=\dbE[I^\mathrm{T} X I]=X.
\end{equation*}

(2) From \rf{lem:sde-semigroup-p1}, we have
\begin{equation*}
\Phi(s+r)^\mathrm{T} X \Phi(s+r) = \Phi(s)^\mathrm{T} \bigl( \tilde\Phi(r)^\mathrm{T} X \tilde\Phi(r) \bigr) \Phi(s),\qq \forall s,r\ges 0.
\end{equation*}
Taking expectation and using the independence of $\tilde\Phi(r)$ and $\Phi(s)$, we obtain
\begin{align*}
\cT_{s+r}(X)
&= \dbE\big[\Phi(s+r)^\mathrm{T} X\Phi(s+r)\big] \\
&= \dbE\big[\Phi(s)^\mathrm{T} \tilde\Phi(r)^\mathrm{T} X\tilde\Phi(r)\Phi(s)\big] \\
&= \dbE\Big[\Phi(s)^\mathrm{T} \dbE\big[\tilde\Phi(r)^\mathrm{T} X\tilde\Phi(r)\,\big|\,\cF_s\big]\Phi(s)\Big] \\
&= \dbE\big[\Phi(s)^\mathrm{T} \cT_r(X)\Phi(s)\big] \\
&= \cT_s\big(\cT_r(X)\big).
\end{align*}

(3) Direct estimation yields
\begin{equation*}
   \|\cT_s(X)\|_F^2 \les \dbE\bigl[ \|\Phi(s)^\mathrm{T} X \Phi(s)\|_F^2 \bigr] \les \dbE\bigl[ \|\Phi(s)\|_F^4 \bigr] \|X\|_F^2.
\end{equation*}
Since all moments of linear SDE solutions are bounded on finite time intervals, we have $\|\cT_s\| < \infty$.

\ms
We now derive the explicit form of the infinitesimal generator $\cL(\cd)$ by applying  It\^o's formula.
For any fixed matrix $X\in\mathbb{R}^{n\times n}$, consider the process $\Phi(s)^\mathrm{T} X\Phi(s)$.
The It\^o differential of this matrix-valued process is given by
\begin{align*}
&d\bigl(\Phi(s)^\mathrm{T} X \Phi(s)\bigr)
= \bigl(d\Phi(s)\bigr)^\mathrm{T} X \Phi(s)
+ \Phi(s)^\mathrm{T} X d\Phi(s)
+ \bigl(d\Phi(s)\bigr)^\mathrm{T} X d\Phi(s)\\
&= \Phi(s)^\mathrm{T} \bigl(A^\mathrm{T} X + XA + C^\mathrm{T} X C\bigr)\Phi(s)\,ds
+ \Phi(s)^\mathrm{T} \bigl(C^\mathrm{T} X + X C\bigr)\Phi(s)\,dW(s).
\end{align*}
Taking expectation on both sides, we have
\begin{equation*}
\dbE\bigl[\Phi(s)^\mathrm{T} X \Phi(s)\bigr]
= \dbE\bigl[\Phi(0)^\mathrm{T} X \Phi(0)\bigr]
+ \int_0^s \dbE\Bigl[\Phi(r)^\mathrm{T} \bigl(A^\mathrm{T} X + XA + C^\mathrm{T} X C\bigr)\Phi(r)\Bigr]dr.
\end{equation*}
Differentiating both sides with respect to $s$ and recalling $\cT_s(X) = \dbE\big[\Phi(s)^\mathrm{T} X\Phi(s)\big]$, we arrive at
\begin{equation*}
\frac{d}{ds}\cT_s(X)
= \dbE\Bigl[\Phi(s)^\mathrm{T} \bigl(A^\mathrm{T} X + XA + C^\mathrm{T} X C\bigr)\Phi(s)\Bigr].
\end{equation*}
By definition, the infinitesimal generator satisfies $\cL(X) = \displaystyle\frac{d}{ds}\cT_s(X)\big|_{s=0}$, which gives
$$
\cL(X) = A^\mathrm{T} X + XA + C^\mathrm{T} X C.
$$
This completes the proof.

\subsection*{Proof of Example \ref{example:no_solution}}
If the delayed Riccati equation \rf{ERiccati}--\rf{ERiccati-Psi} with the setting \rf{example:no_solution1} admits a solution $\Si(\cdot)$, then $\Si(s)$ is symmetric for any $s\ges 0$. Denote
$$
\Si(0) = \begin{pmatrix} x & y \\ y & z \end{pmatrix}.
$$
Direct calculation yields
$$
A_c+B\Psi
= \begin{pmatrix} 1-x & 1-y \\ 0 & c \end{pmatrix},\qquad
Q + \Psi^\mathrm{T} R\Psi
= \begin{pmatrix} 1+x^2 & xy \\ xy & 1+y^2 \end{pmatrix}.
$$
It will be shown that $1-x<0$. Since $c>0$, we have $c\neq 1-x$, so the matrix exponential can be expressed explicitly as
$$
e^{(A_c+B\Psi)t}
= \begin{pmatrix}
e^{(1-x) t} & \displaystyle (1-y)\,\frac{e^{ct} - e^{(1-x) t}}{c-(1-x)} \\
0 & e^{ct}
\end{pmatrix}.
$$
Since $C\equiv 0$ and $D\equiv 0$, direct integration of \rf{ERiccati}--\rf{ERiccati-Psi} over the interval $[0,1]$ yields the integral equation:
\begin{equation}\label{eq:inteq}
\Si(0)
= \int_0^1 e^{\,(A_c + B \Psi)^\mathrm{T} t}
\bigl(Q + \Psi^\mathrm{T} R\Psi\bigr)
e^{\,(A_c +B \Psi) t} dt.
\end{equation}
Extracting the $(1,1)$-entry from the above integral equation, we obtain the following scalar relation:
\bel{example:no_solution-p7}
x = (1+x^2) \int_0^1 e^{(1-x) t} dt.
\ee
The right-hand side is strictly positive, which immediately implies $x>0$. If $0<x\les 1$, then $2(1-x)\ges 0$, so the integral $\displaystyle\int_0^1 e^{2(1-x) t} dt$ is bounded below by $1$, and hence
$$
(1+x^2)\int_0^1 e^{2(1-x) t} dt > x.
$$
Thus no solution exists for $0<x\les 1$, and we must have $x>1$.

\medskip

Since $x>1$, we have $x-1>0$. Directly integrating the right-hand side of equation \rf{example:no_solution-p7} yields
$$
\int_0^1 e^{2(1-x) t} dt
= \frac{1 - e^{2(1-x)}}{2(x-1)}.
$$
Substituting the integral result back gives
$$
2(x-1)x = \big(x^2+1\big)\big(1 - e^{-2(x-1)}\big).
$$
After simplification, this reduces to
\bel{example:no_solution-p6}
e^{-2(x-1)} = \frac{-x^2 + 2x + 1}{x^2+1}.
\ee
Clearly, the above equation \rf{example:no_solution-p6} can only have a solution over $1<x<1+\sqrt{2}$.
Define the equation
$$
\psi(x) := \ln\left(\frac{x^2+1}{-x^2 + 2x + 1}\right) - 2(x-1) = 0,\qquad 1<x<1+\sqrt{2}.
$$
Then the solvability of \rf{example:no_solution-p6} over $1<x<1+\sqrt{2}$ is equivalent to that of the above equation.
It can be verified that $\psi(1)=0$, $\psi'(1)=-1$, and $\psi(x)\to +\infty$ as $x\uparrow 1+\sqrt{2}$. The second derivative is explicitly computed as
$$
\psi''(x)
= \frac{4(x-1)^5+28(x-1)^4+48(x-1)^3+32(x-1)^2+16(x-1)+16}
{\big(2-(x-1)^2\big)^2\big((x-1)^2+2(x-1)+2\big)^2} > 0,
$$
which implies that $\psi(\cdot)$ is strictly convex on $(1,1+\sqrt{2})$.
Therefore, the equation admits exactly one positive root. Numerical evaluation gives
$$
x \approx 2.217136713648735,\qquad
1-x \approx -1.217136713648735.
$$

With the above fixed $x$ and $1-x$, the $(1,2)$-entry of the integral equation \rf{eq:inteq} yields
\bel{example:no_solution-p1}
y = \int_0^1 e^{(1-x) t}
\left[
(1+x^2)\,(1-y)\,\frac{e^{ct}-e^{(1-x) t}}{c-(1-x)}
+\, x y\, e^{ct}
\right] dt.
\ee
Denote
$$
K_1(c) = (1+x^2)\int_0^1 e^{(1-x) t}\,\frac{e^{ct}-e^{(1-x) t}}{c-(1-x)}dt,\qquad
K_2(c) = x\int_0^1 e^{((1-x)+c)t}dt.
$$
Then the equation \rf{example:no_solution-p1} can be simplified to
$$
y = (1-y)K_1(c) + y K_2(c),
$$
or equivalently
\bel{example:no_solution-p10}
y\big(1 + K_1(c) - K_2(c)\big) = K_1(c).
\ee
Denote
$$
F(c) = 1 + K_1(c) - K_2(c).
$$
We claim that there exists some $c>1$ such that $F(c)=0$. Direct substitution of $c = 1$ into the integral definitions of $K_1(c)$ and $K_2(c)$, together with the valid identity $x = (1+x^2)\int_0^1 e^{2\lambda t}\,dt$ for the fixed $\lambda = 1 - x$, yields a positive value
$$
F(1) > 0.
$$
As $c\to +\infty$, we have the asymptotic expansions
$$
K_1(c) \sim \frac{1+x^2}{c^2}e^{\lambda+c},\qquad
K_2(c) \sim \frac{x}{c}e^{\lambda+c},
$$
which imply that $K_2(\cd)$ dominates $K_1(\cd)$ asymptotically. Thus, we have $F(c)\to -\infty$ as $c\to +\infty$. By the continuity of $F(\cdot)$ over $c>0$ and the intermediate value theorem, there exists a constant $c_*>1$ such that $F(c_*)=0$. Numerical computation gives
$$
c_* \approx 1.695698231787353.
$$

Now fix $c = c_*$. Since $c_*>1>\lambda$, we have
$$
\frac{e^{c_* t}-e^{\lambda t}}{c_*-\lambda} > 0,\quad \forall t>0,
$$
which implies $K_1(c_*)>0$. Substituting $F(c_*)=0$ into the algebraic equation \rf{example:no_solution-p10} leads to
$$
0\cdot y = K_1(c_*).
$$
This is a clear contradiction. Consequently, the system \eqref{ERiccati}--\eqref{ERiccati-Psi} does not have a solution for $c = c_*$.

\end{document}